\documentclass[11pt]{amsart}
\usepackage{amsmath,amsfonts,amsthm,amssymb,graphicx,tikz,tikz-cd,url,hyperref}
\usepackage[all,2cell,ps]{xy}
\usepackage{thm-restate}
\usepackage[percent]{overpic}
\usepackage[T1]{fontenc}
\usepackage[utf8]{inputenc}

\theoremstyle{plain}
\newtheorem{thm}{Theorem}[section]

\newtheorem{lem}[thm]{Lemma}
\newtheorem{prop}[thm]{Proposition}

\newtheorem{cor}[thm]{Corollary}

\newtheorem{qtn}[thm]{Question}

\theoremstyle{definition}
\newtheorem{defn}[thm]{Definition}

\newtheorem{rem}[thm]{Remark}

\theoremstyle{remark}

\newcommand{\C}{\mathbb{C}}

\newcommand{\F}{\mathbb{F}}

\newcommand{\Hy}{\mathbb{H}}

\newcommand{\N}{\mathbb{N}}

\newcommand{\Q}{\mathbb{Q}}
\newcommand{\R}{\mathbb{R}}

\newcommand{\Z}{\mathbb{Z}}

\DeclareMathOperator{\SL}{SL}
\DeclareMathOperator{\PSL}{PSL}

\DeclareMathOperator{\tr}{tr}

\DeclareMathOperator{\Isom}{Isom}

\DeclareMathOperator{\Mod}{Mod}

\DeclareMathOperator{\area}{area}
\DeclareMathOperator{\leng}{leng}

\DeclareMathOperator{\geoc}{\overline{c}^*}
\DeclareMathOperator{\geoci}{\overline{c}^*_i}
\DeclareMathOperator{\barc}{\overline{c}}
\DeclareMathOperator{\barci}{\overline{c}_i}

\title{Finiteness of closed arithmetic hyperbolic surface bundles}
\author{Tam Cheetham-West}
\address{Department of Mathematics, Yale University, New Haven, CT, 06511}
\email{tamunonye.cheetham-west@yale.edu}
\author{Homin Lee}
\address{\hskip-\parindent
Center for Mathematical Challenges, Korea Institute for Advanced Study, Seoul, 02455 Republic of Korea}
\email{hominlee@kias.re.kr}
\author{Nicholas Miller}
\address{Department of Mathematics and Statistics, Villanova University, Villanova, PA, 19085}
\email{nicholas.miller@villanova.edu}
\date{}

\begin{document}

\begin{abstract}
For each $g\ge 2$, we show that there are finitely many cyclic commensurability classes of closed, fibered arithmetic hyperbolic $3$-manifolds with genus $g$ fiber.
More generally, we show that there are only finitely many conjugacy classes of admissible surface subgroups of $\PSL_2(\C)$ whose image is contained in the fundamental group of some arithmetic hyperbolic $3$-manifold.
We also give an effectively computable upper bound with effective asymptotic rate on both finiteness statements.
This affirms a conjecture of Bowditch, Maclachlan, and Reid.
\end{abstract}

\maketitle

\section{Introduction}
Surface bundles over the circle have played a pivotal role in the study of $3$-dimensional manifolds over the last several decades.
Indeed, seminal work of Thurston \cite{Thurston2} shows that, when the fiber surface has negative Euler characteristic, such bundles admit a hyperbolic metric of finite-volume precisely when the corresponding monodromy is pseudo-Anosov.
Moreover, Agol's resolution of Thurston's virtual fibering conjecture \cite{Agol,Thurston4} shows that every finite-volume hyperbolic $3$-manifold is finitely covered by such a surface bundle.
Consequently, every commensurability class of finite-volume hyperbolic manifolds contains infinitely many surface bundles.

In \cite{BMR}, Bowditch--Maclachlan--Reid initiated the study of arithmetic surface bundles. 
Recall that, despite Thurston and J\o rgensen's result that volumes of hyperbolic $3$-manifolds are well-ordered with order type $\omega^\omega$ \cite{Thurston} (see also \cite{NeumannZagier}), Borel \cite{Borel} proved that there are only finitely many arithmetic hyperbolic $3$-manifolds of volume less than any given number.
In particular, arithmetic structures are rare in the class of finite-volume hyperbolic $3$-manifolds.
See for instance \cite{BiringerSouto} for one instance of this.

Furthering this philosophy, Bowditch, Maclachlan, and Reid showed that there are finitely many cyclic commensurability classes of non-compact, arithmetic hyperbolic surface bundles with a fiber of fixed topological type \cite[Thm 4.2]{BMR}.
Here, two manifolds are cyclically commensurable if they share a finite-sheeted, cyclic cover, which is the natural equivalence relation for comparing fibered covers arising from powers of the monodromy.
As a sample consequence, for a surface of fixed non-compact topological type, $S_{g,n}$, there are only finitely many Bianchi groups whose commensurability class contains a surface bundle with fiber $S_{g,n}$. 
Bowditch, Maclachlan, and Reid go on to conjecture that the aforementioned finiteness should hold in general.
The purpose of this paper is to confirm that conjecture.

\begin{restatable}{thm}{mainthm}
\label{thm:finiteness}
Fix a natural number $g\ge 2$.
Then there are finitely many cyclic commensurability classes of closed, arithmetic hyperbolic surface bundles with fiber a closed surface of genus $g$.
\end{restatable}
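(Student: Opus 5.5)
The plan is to reduce the statement to a finiteness result about surface subgroups of arithmetic Kleinian groups, and then prove that result by bounding the arithmetic invariants (trace field and quaternion algebra) in terms of $g$.

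\textbf{Reduction to surface subgroups.} Let $M$ be a closed arithmetic hyperbolic surface bundle with fiber $S_g$. Then $\pi_1(S_g)$ embeds as a normal subgroup $F$ of $\pi_1(M)\subset \PSL_2(\C)$. Moreover $F$ is \emph{admissible}: it is a faithful representation of $\pi_1(S_g)$ that is doubly degenerate, with limit set all of $\partial\Hy^3$. The conjugacy class of $F$ in $\PSL_2(\C)$ determines the bundle up to cyclic commensurability. To see this, suppose $M$ and $M'$ contain conjugate fibers $F$ and $F'$. The normalizer $N(F)$ is discrete (it is contained in the commensurator of $F$, and $F$ has finite covolume in the sense of being a fiber of a closed manifold), and $N(F)/F$ is virtually cyclic. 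Hence $\pi_1(M)$ and $\pi_1(M')$ both sit in $N(F)$ as finite-index extensions of $F$ by $\Z$, and their intersection gives a common cyclic cover. So it suffices to show there are finitely many conjugacy classes of admissible genus $g$ surface groups $F<\PSL_2(\C)$ lying in some arithmetic Kleinian group.

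\textbf{Bounding the arithmetic data.} Fix such an $F$ inside an arithmetic lattice $\Gamma$. Since $F$ is Zariski dense and nonelementary, its invariant trace field and invariant quaternion algebra $(k F, AF)$ coincide with those of $\Gamma$. Here $k$ is a number field with exactly one complex place, and $A$ is ramified at all real places. Because $F$ is generated by $2g$ elements with traces that are algebraic integers, I would bound these traces directly. Using the discreteness of $\Gamma$ and the fact that $\Gamma$ has no parabolics, a Jørgensen/Margulis-type argument on a short generating set (obtained from a bounded-geometry pleated surface in the homotopy class of the fiber) gives a uniform bound, depending only on $g$, on the absolute value of the traces of the generators and of their bounded-length products at the complex place. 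At the real places the traces lie in $[-2,2]$ because $A$ is ramified there. The traces therefore generate $k$, and bounded-height algebraic integers of this kind form a finite set. Consequently there are finitely many possible $k$ and finitely many possible trace tuples.

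\textbf{Finiteness of conjugacy classes.} The character of $F$ is determined by the traces of finitely many words in the generators, each with bounded length. These are algebraic integers all of whose Galois conjugates are bounded, and their degree is bounded because the degree of $k$ is bounded. Hence only finitely many characters occur. Irreducible representations are determined up to conjugacy by their characters, which gives the finiteness of conjugacy classes and, by the reduction above, the theorem.

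The main obstacle is the uniform bound at the complex place. Doubly degenerate groups are not geometrically finite, so the pleated surface controls only lengths in $M$ and not directly the traces of a fixed marking. The approach I would try first is to pick, for each $F$, a marking realized by a pleated surface of area $2\pi(2g-2)$ whose injectivity radius is bounded below. The lower bound comes from arithmeticity: closed geodesics in arithmetic manifolds with bounded-degree trace field have length bounded below, by a Lehmer-type bound or by the Salem-number argument. This marking gives generators of bounded length and hence bounded traces. Changing the marking by the mapping class group does not affect the conjugacy class of the image subgroup, so the argument goes through.
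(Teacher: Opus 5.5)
\textbf{The gap: your argument never bounds the degree of the trace field.} Your reduction to $\PSL_2(\C)$-conjugacy classes of fiber subgroups is fine. It is the Bowditch--Maclachlan--Reid ending-lamination step, and discreteness of $N(F)$ follows simply from $F$ being nonelementary. The rest of the argument, however, is circular exactly where the theorem has content.

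The finiteness you invoke for algebraic integers bounded at every archimedean place is Northcott's theorem, and it needs an a priori bound on the degree. Without one it fails: the numbers $2\cos(2\pi/n)$ are infinitely many algebraic integers with all conjugates in $[-2,2]$. So ``finitely many possible $k$'' does not follow from your trace bounds. Your later sentence ``their degree is bounded because the degree of $k$ is bounded'' then uses something never proved.

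The uniform trace bound is not justified independently of the degree either. Your pleated-surface argument controls generator lengths only through the diameter, and that requires a lower bound on injectivity radius. For general hyperbolic bundles of fixed genus the systole can be arbitrarily short. The only bound arithmeticity supplies is the Lehmer/Dobrowolski bound $\ell_0(d)$ (or $\ell_1(d)$), and it tends to $0$ as $d=[k\Gamma:\Q]\to\infty$. A degree-free lower bound is the open short geodesic conjecture. In effect you have reproved the bounded-degree result, Theorem~\ref{thm:BMR}. The statement to be proved is precisely the conjecture that this degree hypothesis can be removed.

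\textbf{What the paper supplies instead.} The missing ingredient is a bound $[k\Gamma:\Q]\le d(g)$. The paper gets it from the arithmetic Margulis lemma of Fr{\k{a}}czyk--Hurtado--Raimbault (Corollary~\ref{cor:aml}). At scale $\delta=\epsilon_3[k\Gamma:\Q]$, the elements moving a point by at most $\delta$ generate a cyclic group, so $\delta$-Margulis tubes have radius growing linearly in $\delta$.

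The paper then takes a useful simplicial hyperbolic surface in the class of the fiber, with cone angle in $[2\pi,2\pi+\eta_0]$. It uses a Bers pants decomposition whose cuffs have length at most $L_0(g)$. The $1$-Lipschitz property places the cuffs deep inside $\delta$-tubes. The arithmetic Margulis lemma then forces collars of half-width $w_0(\delta)$, which grows linearly in $\delta$, to embed in the surface.

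A single such collar has area at least $2\ell_1(d)\sinh(w_0)-\eta_0$. This grows exponentially in $d$, because $\ell_1(d)$ decays only polylogarithmically. Once $d$ is large this contradicts the Gauss--Bonnet bound $4\pi(g-1)$. Your Lehmer-bound idea is therefore the right companion ingredient, but it only works once it is played off against this exponential collar growth. After the degree bound is established, your argument (the BMR argument) finishes the proof.
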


\begin{figure}
    \centering
    \includegraphics[width=0.27\linewidth]{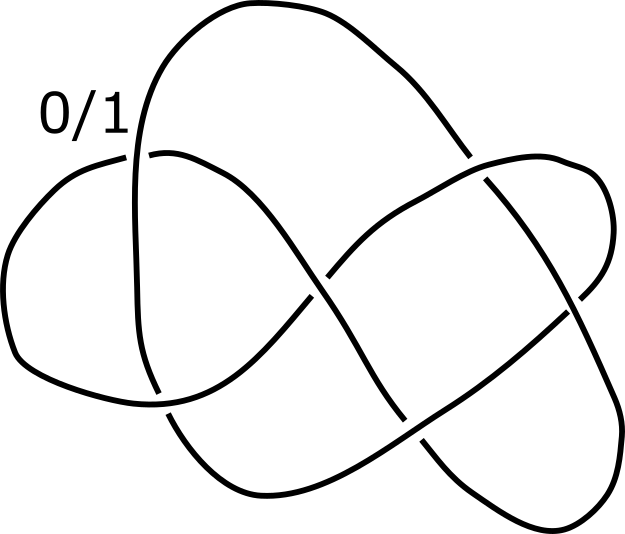}
    \includegraphics[width=0.27\linewidth]{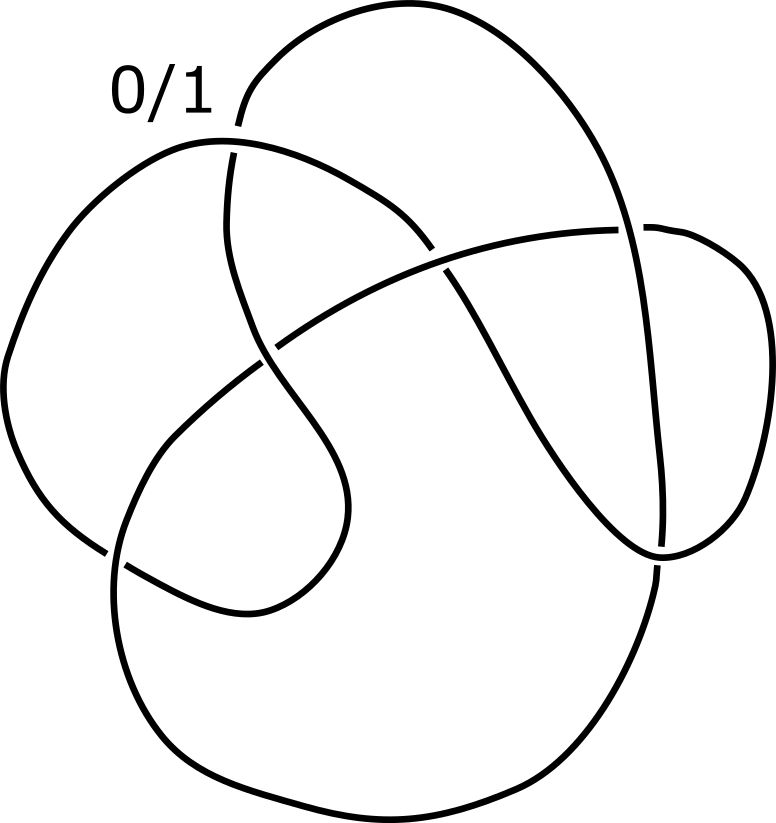}
    \includegraphics[width=0.27\linewidth]{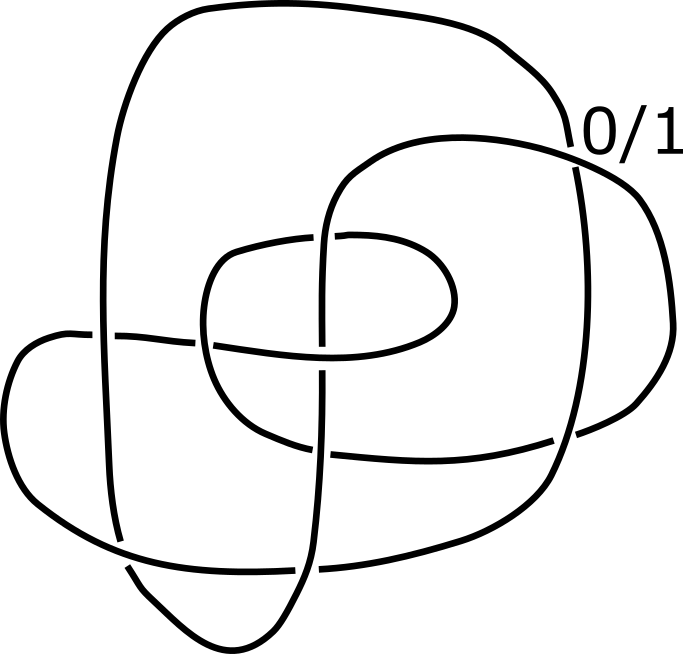}
    \caption{Surgery descriptions of some genus 2 closed arithmetic surface bundles with short monodromies in the mapping class group of the genus 2 surface. All examples in this figure are zero surgeries on genus 2 fibered knots and therefore fiber over the closed genus 2 surface \cite[Cor 8.19]{Gabai}.}
    \label{fig:placeholder}
\end{figure}

It was previously reasonable to expect that a typical mapping torus of a fixed surface is non-arithmetic. Indeed, Sisto--Taylor's theorem \cite[Thm 1.3]{SistoTaylor} that a random mapping torus has a short systole hints that the probability of observing a non-arithmetic mapping torus for a random mapping class $\omega_n$ in $\Mod(S_g)$ goes to $1$ as $n\to\infty$.
However, this is a probabilistic statement, as probability $1$ still does not imply a finiteness statement.
Therefore, Theorem \ref{thm:finiteness} gives the stronger deterministic statement and moreover gives an effective version of it (see Theorem \ref{thm:effective} below).

We now briefly outline the idea of the proof.
By \cite[Thm 4.3]{BMR} (see the discussion in Section \ref{sec:BMR}), it is enough to show that the invariant trace field of any such surface bundle is bounded in degree by a constant depending only on $g$.
The key new impetus for our work is the arithmetic Margulis lemma of Fr{\k{a}}czyk, Hurtado, and Raimbault \cite{FHR} (see the discussion in Section \ref{sec:AML}).
This result allows one to constrain, in an effective manner, a simplicial hyperbolic surface in the homotopy class of the fiber using the data of the trace field. 
As we will show in Section \ref{sec:proofsec}, as the degree of the invariant trace field increases, the arithmetic Margulis lemma forces the area of this simplicial hyperbolic surface to increase.
On the other hand, Gauss--Bonnet provides a bound on the area of such a surface depending only on $g$.
Hence, there must be a universal bound on the degree of the invariant trace field depending only on $g$.

As it turns out, though the main motivation of this paper is to prove Theorem \ref{thm:finiteness}, the ingredients of the proof do not depend on $M$ being fibered and we obtain the following generalization (see Section \ref{sec:generalization}).

\begin{thm}\label{thm:generalization}
Suppose that $g$ is a fixed natural number for which $g\ge 2$.
Then there are at most finitely many $\PSL_2(\C)$-conjugacy classes of admissible surface groups $\Delta=\pi_1(S_g)< \PSL_2(\C)$ such that $\Delta$ is contained in the fundamental group $\Gamma$ of some arithmetic hyperbolic $3$-manifold $M$.
\end{thm}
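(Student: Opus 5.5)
The strategy is to reduce to a degree bound on the invariant trace field and then count. First I would check that admissibility (as in \cite{BMR}: $\Delta$ is non-elementary, torsion-free, and the invariant trace field $k\Delta$ and quaternion algebra $A\Delta$ agree with those of the ambient arithmetic group) forces $k\Delta = k\Gamma$ and $A\Delta \cong A\Gamma$. This is because $\Delta$ is a finitely generated non-elementary subgroup of the arithmetic group $\Gamma$, so it is Zariski dense and its invariant trace field sees the full arithmetic data. Next I would apply the discreteness and finiteness argument underlying \cite[Thm 4.3]{BMR}. Fix a bound $D$ on $[k\Delta:\Q]$. There are then finitely many possibilities for the conjugacy class of $\Delta$: the traces of a fixed finite generating set of $\pi_1(S_g)$ are algebraic integers of bounded degree. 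In the compact arithmetic case all their Galois conjugates under non-identity real places have absolute value at most $2$. The remaining complex conjugate is bounded via a bounded-length generating set, which comes from a bounded-area hyperbolic surface. Northcott-type finiteness then yields finitely many trace tuples, hence finitely many characters, hence finitely many conjugacy classes. So the whole theorem reduces to bounding $[k\Delta:\Q]$ by a constant $D(g)$.

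For the degree bound I would follow the outline of the introduction. Choose a simplicial hyperbolic surface $f\colon S_g \to M=\Hy^3/\Gamma$ representing $\Delta$ (or its image in the cover $\Hy^3/\Delta$). Gauss--Bonnet gives $\area(f)\le 2\pi(2g-2)$. I would then apply the arithmetic Margulis lemma of \cite{FHR}. There is a constant $\epsilon>0$, depending only on dimension, such that for $x\in\Hy^3$ the group generated by elements of $\Gamma$ moving $x$ less than $\epsilon\cdot[k\Gamma:\Q]$ (or a comparable increasing function of the degree) is virtually nilpotent, hence elementary. Take a point $x$ on the simplicial surface. The surface group $\Delta$ is generated by loops at $x$ of length at most roughly twice the intrinsic diameter of the surface plus short loops, and these cannot all lie in an elementary group. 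So some generator has displacement at least $\epsilon\,\deg$ at every point of the surface.

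The key step, and the main obstacle, is converting this into an area lower bound. The surface must have injectivity radius at least roughly $c\cdot \deg k\Gamma$ at every point. Otherwise a short essential loop, together with the nearby loops needed to generate, would give a non-elementary group generated by small displacements, contradicting the arithmetic Margulis lemma. The delicate point is that the Margulis lemma concerns all of $\Gamma$, while the surface's intrinsic geometry controls only $\Delta$. I would handle this by taking a point of the surface where two non-commuting elements of $\Delta$ are both short. Such a point exists because a closed surface of bounded area has bounded diameter relative to its thin parts; in the worst case I would pass to a thick point, using a standard thick-thin decomposition. Once a point has intrinsic injectivity radius $r\gtrsim \deg$, the CAT($-1$) comparison for simplicial hyperbolic surfaces gives $\area \ge 2\pi(\cosh r -1)$. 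Comparing with $2\pi(2g-2)$ bounds $\deg k\Gamma$ logarithmically in $g$, which completes the reduction.
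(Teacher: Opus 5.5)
\textbf{Gap: the large-injectivity-radius point.} Write $d=[k\Gamma:\Q]$. The degree bound hinges on your claim that some point of $(S_g,\rho)$ has injectivity radius $\gtrsim \epsilon_3 d$. You only need this at one point, not at every point. Your justification does not establish it.

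\begin{itemize}
\item At a point of small injectivity radius, say deep in a collar around a short curve, the only short loops may be powers of a single loop. These generate a cyclic group, while the other loops needed to generate $\pi_1(S_g,x)$ can be long. One short loop gives no contradiction with the arithmetic Margulis lemma.
\item The proposed repair does not help. A point where two non-commuting elements of $\Delta$ both move it less than $\epsilon_3 d$ would contradict Corollary~\ref{cor:aml} outright rather than produce a thick point. You also give no bound, in terms of $g$, on the lengths of such a pair.
\item The appeal to a ``standard thick--thin decomposition'' is circular. That decomposition lives at a fixed small scale, whereas here the scale $\epsilon_3 d$ grows with the degree. Whether the surface has any point that is thick at this scale is exactly what must be shown.
\item Your observation that $2\,\mathrm{diam}(S_g,\rho)\gtrsim \epsilon_3 d$ yields nothing until the diameter is bounded. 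A priori, very short curves make the diameter arbitrarily large, and you never invoke the Dobrowolski-type length bound that would prevent this.
\end{itemize}

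\textbf{How to close the gap.} For $\tilde x\in\widetilde{S_g}$, let $F(\tilde x)$ be the set of non-trivial $\gamma\in\pi_1(S_g)$ with $d_{\tilde\rho}(\tilde x,\gamma\tilde x)<\epsilon_3 d$.
\begin{itemize}
\item The lift of $\iota$ is equivariant and $1$-Lipschitz, and $\iota_*$ is injective. So Corollary~\ref{cor:aml} makes $\langle F(\tilde x)\rangle$ virtually nilpotent, hence cyclic in a surface group.
\item By discreteness, $F(\tilde x)\subseteq F(\tilde y)$ for $\tilde y$ near $\tilde x$. If $F$ never vanished, the maximal cyclic subgroup containing $F(\tilde x)$ would therefore be locally constant, hence constant on $\widetilde{S_g}$.
\item Since $F(\gamma\tilde x)=\gamma F(\tilde x)\gamma^{-1}$, that constant subgroup would be normal in $\pi_1(S_g)$, which is impossible.
\end{itemize}
So some ball of radius $\epsilon_3 d/2$ embeds in $(S_g,\rho)$. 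Your CAT$(-1)$ comparison then gives $2\pi(\cosh(\epsilon_3 d/2)-1)\le 4\pi(g-1)$, which is indeed a logarithmic bound.

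Alternatively, bound the diameter by $O(g/\ell_0(d))$ using the Dobrowolski length bound and the packing estimate of \cite[Prop 2.7]{BMR}. This turns your diameter observation into $d=g^{1+o(1)}$.

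With either ingredient, your route is genuinely different from, and shorter than, the paper's. The paper never needs a thick point: it works with Bers pants curves of length at most $L_0(g)$. It then uses the Futer--Purcell--Schleimer tube radius and Dobrowolski's bound to show that collars of half-width linear in $\delta$ embed and have area growing exponentially.

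\textbf{A smaller error.} Admissibility means discrete, faithful, with no accidental parabolics. It does not force $k\Delta=k\Gamma$, and Zariski density does not give this. Surface groups coming from totally geodesic immersions have totally real $k\Delta\subsetneq k\Gamma$; the paper handles them via Lemma~\ref{lem:tglemma} and Takeuchi's finiteness. This is harmless for your counting step, which only needs $k\Delta\subseteq k\Gamma$.
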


Here by an admissible surface group, we mean one that is discrete, faithful, and has no accidental parabolics.
The former two adjectives are automatic from the assumption on the image but the latter can occur when $\Gamma$ is not cocompact.
For non-compact, finite-volume hyperbolic surface subgroups, the analogous statement was proved by Bowditch, Maclachlan, and Reid \cite[Thm 4.3]{BMR} assuming that admissibility also included the condition of being type-preserving (that is, taking peripheral elements to parabolics). 
In particular, the confluence of these results shows that Theorem \ref{thm:generalization} holds for all finite-type hyperbolic surfaces.

As we discuss in Section \ref{sec:generalization}, it is impossible to conclude the stronger statement that there are finitely many commensurability classes of arithmetic hyperbolic $3$-manifolds whose image contains such $\Delta$.
However, as noticed in \cite[Cor 4.5]{BMR}, examining the proof of Theorem \ref{thm:generalization} one does obtain the following sharper corollary.

\begin{cor}\label{cor:generalization}
Suppose that $g$ is a fixed natural number for which $g\ge 2$.
Then there exist finitely many commensurability classes of arithmetic hyperbolic $3$-manifolds, $\mathcal{C}_1,\dots, \mathcal{C}_s$, such that for every $\PSL_2(\C)$-conjugacy class of admissible closed surface group $\Delta=\pi_1(S_g)<\PSL_2(\C)$ whose image lies in the fundamental group $\Gamma$ of an arithmetic hyperbolic $3$-manifold $M$ either
\begin{enumerate}
\item $M$ belongs to some $\mathcal{C}_i$, or
\item the inclusion $\Delta<\Gamma$ is induced by a totally geodesic immersion from an arithmetic hyperbolic surface.
\end{enumerate}
\end{cor}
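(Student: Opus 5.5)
The plan is to read the corollary off from the proof of Theorem \ref{thm:generalization}, following \cite[Cor 4.5]{BMR}. That proof will show two things: the invariant trace field $k\Delta$ of an admissible genus $g$ surface subgroup of an arithmetic $\Gamma$ has degree bounded by a constant $D(g)$, and only finitely many such fields and invariant quaternion algebras can occur. I would then split according to whether the invariant trace field of $\Delta$ equals that of the ambient group, or is strictly smaller.

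\textbf{Case 1: $k\Delta = k\Gamma$.} Since $\Delta<\Gamma$ and $\Delta$ is non-elementary, we have $k\Delta\subseteq k\Gamma$ and $A\Delta\subseteq A\Gamma$. If equality holds, then $A\Delta=A\Gamma$, because a quaternion algebra over a field is spanned by any non-elementary subgroup. The commensurability class of the arithmetic group $\Gamma$ is determined by the pair $(k\Gamma,A\Gamma)$. So $M$ lies in one of the finitely many classes $\mathcal{C}_i$ cut out by the finitely many pairs $(k\Delta,A\Delta)$ produced in the proof of Theorem \ref{thm:generalization}.

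Finiteness of these pairs needs a little care. The number fields of degree at most $D(g)$ with exactly one complex place are finite only once discriminants are also bounded. Here I would invoke the BMR reduction in Section \ref{sec:BMR}: the traces are algebraic integers controlled by finitely many conjugacy classes, which is exactly the content of Theorem \ref{thm:generalization}. Each of the finitely many conjugacy classes of $\Delta$ gives one pair $(k\Delta,A\Delta)$, so finiteness follows.

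\textbf{Case 2: $k\Delta\subsetneq k\Gamma$.} Since $k\Gamma$ has exactly one complex place, a proper subfield of it must be totally real. So $k\Delta\subset\R$, and more precisely $\tr(\gamma)^2\in\R$ for all $\gamma\in\Delta$. By a standard lemma of Maclachlan--Reid, a non-elementary subgroup of $\PSL_2(\C)$ with real traces of squares (here $\Delta^{(2)}$, and then $\Delta$) is conjugate into $\PSL_2(\R)$ or a Fuchsian group up to index two. Hence $\Delta$ preserves a totally geodesic plane in $\Hy^3$. Because $\Delta$ is admissible and isomorphic to $\pi_1(S_g)$, it acts cocompactly on that plane, so the inclusion is induced by a totally geodesic immersion $\Hy^2/\Delta\to M$.

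The surface $\Hy^2/\Delta$ is arithmetic because it has finite area and its traces are integral with the arithmeticity conditions inherited from $\Gamma$: Takeuchi's criterion holds since every Galois embedding of $k\Delta$ other than the identity extends to one of $k\Gamma$ where $A\Gamma$ ramifies. This gives alternative (2).

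\textbf{Main obstacle.} The delicate step is the bookkeeping in Case 1. Theorem \ref{thm:generalization} gives finiteness of conjugacy classes of $\Delta$, whereas the corollary needs finiteness of commensurability classes of the ambient $M$. This is bridged by noting that when $k\Delta=k\Gamma$, the pair $(k\Gamma,A\Gamma)$ is determined by $\Delta$ alone. That is exactly why the totally real case, where the ambient field is not determined, must be split off as alternative (2).
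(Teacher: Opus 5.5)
Your proposal is correct and follows essentially the paper's route. The paper deduces the corollary from Lemma \ref{lem:tglemma}, whose proof gives exactly your dichotomy (a proper subfield of $k\Gamma$ is totally real, which forces $\Delta$ to preserve a plane and be arithmetic Fuchsian; otherwise $k\Delta=k\Gamma$), combined with the finiteness of conjugacy classes from Theorem \ref{thm:generalization}. Your explicit observation that $k\Delta=k\Gamma$ forces $A\Delta=A\Gamma$, so that each of the finitely many conjugacy classes of $\Delta$ determines a single commensurability class of $M$, supplies the step the paper leaves implicit.
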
 

Note that in the latter case, we do not exclude the possibility that $M$ is also contained in some $\mathcal{C}_i$.
That is, it is entirely possible that the fundamental group of the same manifold $M$ contains two different $\PSL_2(\C)$-conjugacy classes of surface subgroups $\pi_1(S_g)$ in $\PSL_2(\C)$.
Again, the non-compact case was shown by Bowditch, Maclachlan, and Reid \cite[Cor 4.5]{BMR} and therefore Corollary \ref{cor:generalization} holds for all finite-type surfaces. 

Finally, we give an effective version of both Theorems \ref{thm:finiteness} and \ref{thm:generalization}.

\begin{thm}\label{thm:effective}
Suppose that $g$ is a fixed natural number for which $g\ge 2$.
Then there is an effectively computable function $f(g)$ so that there are at most finitely many $\PSL_2(\C)$-conjugacy classes of admissible surface groups $\Delta=\pi_1(S_g)< \PSL_2(\C)$ such that $\Delta$ is contained in the fundamental group $\Gamma$ of some arithmetic hyperbolic $3$-manifold $M$.
Moreover, $f(g)=e^{g^{3+o(1)}}$ with effectively computable constants.
In particular, the number of cyclic commensurability classes of closed, arithmetic surface bundles with a fiber of genus $g$ is asymptotically at most $e^{g^{3+o(1)}}$.
\end{thm}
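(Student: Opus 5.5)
The plan is to make every step of the proof of Theorem \ref{thm:generalization} quantitative and then count. The argument has three stages.

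\textbf{Stage 1: bound the degree of the invariant trace field.} Let $\Delta=\pi_1(S_g)<\Gamma$ be admissible and let $k=k\Gamma$ be the invariant trace field, of degree $d$. I would realize the fiber class by a simplicial hyperbolic surface $f\colon S\to M$, triangulated with one vertex $v$. Such a surface has intrinsic curvature $\le -1$, so Gauss--Bonnet gives $\area(S)\le 2\pi(2g-2)$. Next I would apply the arithmetic Margulis lemma of Fr\k{a}czyk--Hurtado--Raimbault in its effective form. It says that for every point $x\in M$ outside a controlled thin part, the elements of $\Gamma$ moving a lift of $x$ by less than $\varepsilon(d)\asymp c\,d$ (up to logarithmic factors) generate a virtually nilpotent subgroup. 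Since $\Delta$ is not virtually abelian and has no accidental parabolics, the subgroup generated by short loops at a point of $f(S)$ cannot contain $\Delta$. Hence every point of $S$ has injectivity radius comparable to $\varepsilon(d)$, apart from a region whose area is controlled.

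Combining this with the area bound gives the key step. An embedded hyperbolic-type disk of radius $r$ in a CAT$(-1)$ surface has area at least $2\pi(\cosh r-1)\sim \pi e^{r}$. So $e^{c d}\lesssim g$, which would give $d\lesssim \log g$. In practice the thin-part correction and the losses in the Margulis constant are much worse. I expect the honest output to be a polynomial bound $d\le g^{1+o(1)}$, or possibly $d\le C g$. I expect this area estimate, done uniformly and effectively, to be \emph{the main obstacle}. On a first attempt I would make it effective by covering the $\varepsilon$-thick part of $S$ by disjoint disks and bounding the number of non-elementary generating configurations.

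\textbf{Stage 2: bound the remaining arithmetic data.} With $d$ bounded, I would follow the BMR argument from \cite[Thm 4.3]{BMR}. Up to conjugacy, $\Delta$ is generated by $2g$ matrices whose traces are algebraic integers in $k$, of degree at most $2d$. The hyperbolic length of each generator at the basepoint $v$ is at most $\operatorname{diam}(S)\lesssim g$. All nonidentity Galois conjugates of these traces lie in $[-2,2]$, by arithmeticity (the real places ramify). So the house of each trace is at most $e^{O(g)}$. By the Northcott count, the number of algebraic integers of degree $\le D$ with house $\le H$ is at most $e^{O(D^2\log H)}$ or so, here $e^{O(d^2 g)}$.

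The pair of trace data for a suitable finite generating set then determines the conjugacy class of $\Delta$, using $\tr$ of generators and of their pairwise products, as for character varieties. This requires on the order of $g^2$ traces, so the total count is $\bigl(e^{O(d^2 g)}\bigr)^{O(g)}$ or similar.

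\textbf{Stage 3: assemble the asymptotic.} I would tune the bookkeeping so that the exponent is $g\cdot d^2\cdot(\text{number of traces})$ up to $g^{o(1)}$ factors. With $d=g^{o(1)}$ from a logarithmic bound and $O(g^2)$ traces, this gives $e^{g^{3+o(1)}}$. This is the stated rate, and I would use it to reverse-engineer the needed precision in Stage 1: only $d=g^{o(1)}$ is needed if $g^2$ traces are used.

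Finally, the surface bundle statement follows from the same count. Distinct cyclic commensurability classes give distinct conjugacy classes of fiber groups together with bounded data from \cite[Thm 4.3]{BMR}, so the same bound $e^{g^{3+o(1)}}$ applies.
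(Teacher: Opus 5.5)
\textbf{Gap 1: Stage 1 is not proved, and your rate depends on it.} Your architecture matches the paper's: bound $d=[k\Gamma:\Q]$ by an area argument driven by the arithmetic Margulis lemma, then run an effective Bowditch--Maclachlan--Reid count. But Stage 1 is not actually proved. Corollary \ref{cor:aml} holds at \emph{every} point, not only outside a thin part. Through the $1$-Lipschitz, $\pi_1$-injective map it says that loops of length $<\epsilon_3 d$ at any point of $S_g$ generate a cyclic subgroup of $\pi_1(S_g)$. It does not say that most of $S_g$ is thick. $S_g$ can pass through deep Margulis tubes, and you give no argument controlling that region; in the paper it is exactly the collars there that carry the large area. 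So you never produce the embedded disk of radius about $\epsilon_3 d/2$ to which your estimate $2\pi(\cosh r-1)$ applies. You then allow $d\le Cg$, which in your own bookkeeping $g\cdot d^2\cdot g^2$ gives $e^{g^{5}}$, not $e^{g^{3+o(1)}}$.

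The paper closes Stage 1 in four steps. First, Proposition \ref{prop:bers} gives a pants decomposition of $(S_g,\rho)$ with cuffs of length $\le L_0(g)$. Second, Proposition \ref{prop:tuberad} and the Futer--Purcell--Schleimer formula put each image cuff at depth $h(L_0,\delta)$, roughly $\delta/2$, inside its $\delta$-Margulis tube, where $\delta=\epsilon_3 d$. Third, the arithmetic Margulis lemma forces the collar of half-width $w_0=\min\{h(L_0,\delta),(\delta-L_0)/2\}$ about each cuff to embed in $S_g$. Fourth, Corollary \ref{cor:cylinderarea} with Gauss--Bonnet gives $2\ell_1(\delta)\sinh(w_0)-\eta_0\le 4\pi(g-1)$. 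The Dobrowolski factor $\ell_1(\delta)$ is essential here, since a collar's area scales with its core length. This yields $d=O(\log g)$, though the paper only uses $d=O(g)$.

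Your disk picture can also be made rigorous. By the cyclicity above, convexity of displacement functions, and malnormality of maximal cyclic subgroups of $\pi_1(S_g)$, the $\epsilon_3 d$-thin part of $S_g$ is a disjoint union of open annuli. It therefore cannot be all of $S_g$, and a thick point gives $\cosh(\epsilon_3 d/2)\le 2g-1$. But some such argument has to be supplied.

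\textbf{Gap 2: Stage 2 is missing the systole input.} The claim $\operatorname{diam}(S)\lesssim g$ with an absolute constant is not available. The diameter of a genus $g$ CAT$(-1)$ surface is not bounded in terms of $g$ alone, since hyperbolic surfaces with a short geodesic have long collars. One needs a lower bound on its injectivity radius, which here is at least half the systole of $M$. A uniform systole bound for arithmetic $3$-manifolds is the open short geodesic conjecture. Since the BMR trace bound goes through the intrinsic lengths of generators on $S$, without this Stage 2 gives no finiteness at all.

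What the paper uses is Dobrowolski's bound applied to $\tr(\widehat\gamma^{\,2})\in k\Gamma$ (Theorem \ref{thm:Mahlerbound}). This gives translation lengths $\ge\epsilon_0=\Omega((\ln\ln d/\ln d)^3)$. Packing plus Gauss--Bonnet then give $\operatorname{diam}\lesssim g/\epsilon_0$ and $V=e^{O(g/\epsilon_0)}$ (Lemma \ref{lem:BMRconstant}). With $d=O(\log g)$ this is $\log V=g^{1+o(1)}$, which is enough for your rate.

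\textbf{Smaller points.} Traces of generators and pairwise products do not determine the character. Some triple traces are needed, though each is fixed up to two choices by the pairwise data. Individual traces lie in a quadratic extension of $k$, not in $k$. Their conjugates over the complex place, $\pm\tr(\widehat\delta)$ and $\pm\overline{\tr(\widehat\delta)}$, are large rather than in $[-2,2]$.

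\textbf{Why the paper's count is more robust.} It uses only the $8g-5$ traces singled out in BMR. It counts squared traces $\tr(A^2)\in k\Gamma$, all but two of whose conjugates lie in $[-2,2]$. So there are $e^{O(d(d+\log V))}$ minimal polynomials rather than your $e^{O(d^2\log V)}$. This is why the paper reaches $e^{g^{3+o(1)}}$ even from $d=O(g)$, whereas your bookkeeping needs $d=g^{o(1)}$.
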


We believe that the upper bound in the above theorem is far from optimal. 

\begin{qtn}
Is the number of cyclic commensurability classes of closed, arithmetic surface bundle with a fiber of genus $g$ is bounded above by a polynomial in $g$?
\end{qtn}

\bigskip\noindent \textbf{Acknowledgments:} The authors would like to thank Tarik Aougab, Junzhi Huang, and Yair Minsky for helpful conversations and give special thanks to Dave Futer for useful comments on the first draft of this paper.
The authors would also like to extend special thanks to the organizers of the Ventotene 2025 workshop where much of this work came together.
NM was partially supported by NSF grant DMS--2405264/2630020.
HL is supported in part by a KIAS Individual Grant (HP104101) via the June E Huh Center for Mathematical Challenges at Korea Institute for
Advanced Study, and Sanghyun Kim's Mid-Career Researcher Program (RS-2023-00278510) through the National Research Foundation funded by the government of Korea.

\bigskip\noindent \textbf{AI Disclosure:} 
The authors used ChatGPT 5.6 to proofread the paper at the final stage of the project for typos and notational consistency. AI was not used for any other purpose in this paper.

\section{Background}\label{sec:notation}

\subsection{Generalities on hyperbolic 3-manifolds}\label{sec:generalities}
Throughout this paper, we use $M$ to denote a finite-volume hyperbolic $3$-manifold and $\Gamma=\pi_1(M)$ to denote its fundamental group, unless otherwise specified.
By Mostow rigidity, $\Gamma$ can be realized as a subgroup of $\Isom^+(\Hy^3)\cong \PSL_2(\C)$ with matrix entries in a number field.
Under this embedding, given a subgroup $\Delta\le \Gamma$ we define the \emph{trace field} of $\Delta$ via the formula
$$K_\Delta=\Q(\{\tr(\widehat\delta)\mid \delta\in\Delta \}),$$
where $\widehat\delta$ is any lift of $\delta$ to $\SL_2(\C)$. This definition is independent of the choice of lift.
Similarly, we define the \emph{invariant trace field} of $\Delta$ as 
$$k\Delta=\Q(\{\tr(\widehat\delta)\mid \delta\in\Delta^{(2)} \}),$$
where $\Delta^{(2)}$ is the subgroup generated by squares.
It follows from Mostow rigidity and finite generation of $\Gamma$ that for any $\Delta\le \Gamma$, $K_\Delta$ and $k\Delta$ are always number fields. In fact, they are always subfields of the number fields $K_\Gamma$, $k\Gamma$ (respectively).

We say that two hyperbolic $3$-manifolds $M=\Hy^3/\Gamma$ and $M'=\Hy^3/\Gamma'$ are \emph{commensurable} if they share a finite sheeted cover.
The invariant trace field is a commensurability invariant of $M$, specifically, if $M$ and $M'$ are commensurable then $k\Gamma\cong k{\Gamma'}$ \cite[Thm 3.3.4]{MRBook}. 
Moreover, there is a further invariant called the invariant quaternion algebra, $A\Gamma$, which has the similar property \cite[Cor 3.3.5]{MRBook}. 

\begin{defn}
An arithmetic hyperbolic 3-manifold $M$ is a hyperbolic 3-manifold in the commensurability class of $\Hy^3/\text{P}(\mathcal{O}^1)$ for $\text{P}(\mathcal{O}^1)<\PSL_2(\C)$, where $\mathcal{O}^1$ is the group of norm-one elements of a maximal order $\mathcal{O}$ in a quaternion algebra $B/k$ over a number field $k$ with a single complex place such that $B\otimes_{k,\sigma} \R\cong \mathcal{H}$ (the Hamiltonian quaternions) at every real place $\sigma:k\hookrightarrow\R$. 
\end{defn}

In the setting that $M$ is arithmetic, the two invariants $k\Gamma$ and $A\Gamma$ are complete commensurability invariants, that is, $M$ and $M'$ are commensurable if and only if $k\Gamma\cong k{\Gamma'}$ and $A\Gamma\cong A{\Gamma'}$.
As it will not be relevant to this paper, we omit the definition of $A\Gamma$ and refer the reader to Maclachlan--Reid \cite{MRBook} for a thorough discussion of these invariants.

In the sequel, we are interested in closed, fibered hyperbolic $3$-manifolds.
Topologically, these arise as mapping tori of pseudo-Anosov maps $\phi$ of a genus $g$ surface $S_g$. 
We denote the corresponding manifold by $M_\phi$, which is a fiber bundle over the circle with fiber $S_g$ and carries a hyperbolic structure by work of Thurston \cite{Thurston2}. 
On the level of fundamental groups, one has the corresponding exact sequence
$$\xymatrix{1\ar[r]&\pi_1(S_g)\ar[r]&\Gamma\ar[r]&\Z\ar[r]&}1,$$
and hence the fundamental group of the fiber, $\Delta=\pi_1(S_g)$, is a normal subgroup of $\Gamma$.

When $M$ is arithmetic, the arithmeticity of $\Gamma$ is completely controlled by $\Delta$.
Indeed, the invariant trace field and invariant quaternion algebra are completely determined by $\Delta$, as $\Delta$ is a non-elementary, normal subgroup of $\Gamma$ \cite[Thm 4.3.1]{MRBook}. 
Moreover, by work of Bass \cite[Prop 2.8]{Bass}, normality also implies that if $\Delta$ has integral traces, then so does $\Gamma$.
These three properties characterize arithmeticity \cite[Thm 8.3.2]{MRBook} and therefore arithmeticity of $\Gamma$ is detected purely from the embedding of the fiber subgroup $\Delta$ in $\PSL_2(\C)$.

 \subsection{Simplicial hyperbolic surfaces}\label{sec:simplicialsurface}

Fix a one-vertex triangulation of $S_g$ with vertex $v_0$ and a distinguished edge $e_0$. Additionally, fix a closed geodesic $\alpha$ on $M_\phi$ with a distinguished point $x_0\in \alpha$. Then a \emph{useful simplicial surface adapted to $\alpha$} is a $\pi_1$-injective, $1$-Lipschitz map $\iota:S_g\to M$ for which $v_0\mapsto x_0$, $e_0\mapsto \alpha$, and such that every edge in the triangulation is mapped to a geodesic loop in $M$, every open face of the triangulation is a non-degenerate totally geodesic triangle, and the vertex $v_0$ has an interior angle of $\geq 2\pi$.
Pulling back the metric on $M$ via the map $\iota$ induces a singular hyperbolic path metric $\rho$ on $S_g$ with a single cone point of angle $\ge 2\pi$ at $v_0$.
In the sequel, we will frequently not reference the choice of curve $\alpha$ as it will not be needed in our argument.
We will also use bars to denote the image under the map $\iota$, e.g., $\overline{c}=\iota(c)$ for a curve $c$ on $S_g$.

In general, a useful simplicial surface need not be embedded.
Moreover, for every incompressible surface $f:S_g\hookrightarrow M$, there is a \emph{useful simplicial surface} in $M$ homotopic to $f$, as constructed by Canary in \cite{Canary} following \cite{Bonahon}. To see this surface, one can fix a one-vertex triangulation $T$ of $S_g$ containing $e_0\cup v_0$ and lift it to a triangulation $\widetilde{T}$ of $\widetilde{f}:\widetilde{S}_g\hookrightarrow \Hy^3$. A homotopy that takes $e_0\cup v_0$ to its unique closed geodesic representative $\geoc\subset  M$ will lift to a homotopy that sends the edges of the triangulation $\widetilde{T}$ to geodesic paths in $\Hy^3$ while the faces of the triangulation get sent to the lifts of geodesic triangles in $\Hy^3$. The surface $(S_g,T)$ so constructed is a \emph{simplicial prehyperbolic surface}. 

Canary's \emph{Non-Locally Strictly Convex} (NLSC) criterion \cite[Lem 4.2]{Canary} for vertices of a triangulable surface can be used to verify the angle $\geq 2\pi$ condition for vertices of a triangulation in a \emph{simplicial prehyperbolic surface} like $(S_g,T)$ constructed above.
Hence $(S_g,T)$ is, in fact, a \emph{useful simplicial surface}. A vertex $v$ of $(S_g,T)$ is said to be $NLSC$ if there is no open hemisphere in the unit tangent sphere $T^1_{\widetilde{f}(\widetilde{v})}(\Hy^3)$ of the vertex containing the link of $\widetilde{f}(\widetilde{v})$ in $\widetilde{f}(\widetilde{T})$. Since $T$ is a 1-vertex triangulation of $S_g$ and $e_0$ has been mapped to a closed geodesic $f(e_0)$ in $M$, it follows that the link of $\widetilde{f}(\widetilde{v})$ contains an antipodal pair corresponding to the two ends of $e_0$ in $T^1_{\widetilde{f}(\widetilde{v})}(\Hy^3)$, and thus the NLSC criterion is satisfied (see \cite[Lem 3.4]{Fan}). 

It is crucial for our purposes that $\iota:S_g\to M$ is 1-Lipschitz. This follows directly from the fact that away from the 1-skeleton of the triangulation, $\iota$ is totally geodesic in $M$. 
Indeed, straightening the edges makes every face an isometric immersion of a hyperbolic triangle, therefore, the length of any rectifiable path is not increased under the map $\iota$.
Moreover, it will be useful for us to normalize this surface by taking as small a cone angle as possible.
In particular, by spinning the one-vertex triangulation $T$ about the edge $e_0$, for any $\eta_0>0$ there is always a useful simplicial surface in the homotopy class of the fiber of a hyperbolic mapping torus for which the cone angle is in $[2\pi,2\pi+\eta_0]$ (see for instance \cite[Pg 239]{Thurston3}).

\subsection{Margulis tubes and the arithmetic Margulis lemma}\label{sec:AML}

Our argument will revolve around an analysis of $\delta$-Margulis tubes in the thin part of a closed hyperbolic $3$-manifold $M$.
We now collect the relevant details for the reader's convenience.

Fix $\delta>0$. 
Following the conventions of Futer--Purcell--Schleimer \cite{FPS}, we define the $\delta$-thin part of $M$ to be the set
$$M^{<\delta}=\{x\in M\mid \mathrm{injrad}_M(x)<\delta/2\},$$
where $\mathrm{injrad}_M(x)$ denotes the injectivity radius at $x$.
As is well known, this is precisely the set of points of $M$ for which there is a lift $\widetilde{x}$ of $x$ to $\Hy^3$ such that $d_{\Hy^3}(\widetilde{x},\gamma\widetilde{x})<\delta$ for some non-trivial $\gamma\in\Gamma$.
We will use $M^{\leq\delta}:=\overline{M^{<\delta}}$ for its closure in $M$.

Given a non-trivial $\gamma\in\Gamma$, we let $\geoc\!\!\!_\gamma$ denote the corresponding geodesic representative in $M$.
We also let
$$\mathcal{N}_\gamma=\Hy^3/\langle\gamma\rangle,$$
denote the corresponding nonsingular model solid torus, let $\geoc\!\!\!_\gamma^{\mathrm{mod}}$ denote its core geodesic.
We use $\mathcal N_\gamma^{<\delta}$ to denote its $\delta$-thin part and similarly set $\mathcal N_\gamma^{\leq\delta}=\overline{\mathcal N_\gamma^{<\delta}}$.
When this core geodesic is completely contained in $\mathcal{N}_\gamma^{\le\delta}$, there is some $R=R(\geoc\!\!\!_\gamma^{\mathrm{mod}},\delta)\ge 0$ for which the $R$-neighborhood, $N_R(\geoc\!\!\!_\gamma^{\mathrm{mod}})$, is completely contained in $\mathcal{N}_\gamma^{\le\delta}$.
We call the maximal such $R$ the \emph{model tube radius}.
In \cite[Prop 3.10]{FPS}, Futer, Purcell, and Schleimer give an effective estimate on the model tube radius.
In the non-singular case, which is the case we consider here, this formula can also be deduced from older work of Culler--Shalen \cite[\S1.3]{CS}.
Recall that the complex length of a loxodromic element $\gamma$ is the complex number whose real part is the translation length and imaginary part is the rotation angle.
\begin{prop}[Futer--Purcell--Schleimer]\label{prop:FPS}
For any $\delta>0$ such that $\geoc\!\!\!_\gamma^{\mathrm{mod}}\subset \mathcal{N}_\gamma^{\le \delta}$, the model tube radius is given by 
$$R=R(\geoc\!\!\!_\gamma^{\mathrm{mod}},\delta)=
\max_{\substack{n\in\N\\n\tau\le\delta}}
\left\{\cosh^{-1}\left(\sqrt{\frac{\cosh(\delta)-\cos(n\theta)}{\cosh(n\tau)-\cos(n\theta)}}\right)\right\},$$
where $\tau+i\theta$ is the complex length of $\gamma$.
\end{prop}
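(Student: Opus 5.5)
The plan is to compute everything in the universal cover and then use the symmetry of the model solid torus $\mathcal{N}_\gamma=\Hy^3/\langle\gamma\rangle$. Normalize $\gamma$ so that its axis is the geodesic from $0$ to $\infty$ in the upper half-space model. Then $\gamma^n$ acts by $z\mapsto e^{n(\tau+i\theta)}z$, and its axis is fixed.

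\textbf{Step 1: displacement at distance $r$.} For a point $x\in\Hy^3$ at distance $r$ from the axis, the displacement of $x$ by $\gamma^n$ is given by the standard formula
$$\cosh d(x,\gamma^n x)=\cosh^2(r)\cosh(n\tau)-\sinh^2(r)\cos(n\theta).$$
I would derive this by writing $x$ in cylindrical coordinates about the axis. The translation component contributes $\cosh(n\tau)$ and the rotation contributes $\cos(n\theta)$. One can check the formula in the extreme cases $r=0$ and $\tau=0$. Substituting $\sinh^2 r=\cosh^2 r-1$ gives
$$\cosh d=\cosh^2(r)\bigl(\cosh(n\tau)-\cos(n\theta)\bigr)+\cos(n\theta).$$

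\textbf{Step 2: the threshold radius for each power.} Set $d=\delta$ and solve. The condition $d(x,\gamma^n x)<\delta$ becomes
$$\cosh^2 r<\frac{\cosh\delta-\cos(n\theta)}{\cosh(n\tau)-\cos(n\theta)}.$$
A positive solution $r\ge 0$ exists exactly when $\cosh(n\tau)<\cosh\delta$, that is, when $n\tau<\delta$. The boundary case $n\tau=\delta$ gives $r=0$, which matches the closed condition $\le\delta$ in the maximum. Since $\cosh d$ is strictly increasing in $r$, the set of points moved less than $\delta$ by $\gamma^n$ is precisely the open tube of radius
$$r_n=\cosh^{-1}\sqrt{\frac{\cosh\delta-\cos(n\theta)}{\cosh(n\tau)-\cos(n\theta)}}$$
about the axis. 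Here I use that the denominator is positive, because $\tau>0$ for a loxodromic $\gamma$.

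\textbf{Step 3: assemble the thin part.} The nontrivial elements of $\langle\gamma\rangle$ are $\gamma^{\pm n}$, and $\gamma^{-n}$ has the same displacement as $\gamma^n$. Hence the preimage of $\mathcal{N}_\gamma^{<\delta}$ is the union over $n\ge1$ with $n\tau<\delta$ of the open tubes of radius $r_n$. A union of concentric tubes is the tube of the largest radius. So $\mathcal{N}_\gamma^{\le\delta}$ is the closed tube of radius $\max_n r_n$, and it descends to the quotient as $N_R(\geoc\!\!\!_\gamma^{\mathrm{mod}})$ with $R=\max_n r_n$.

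Maximality of $R$ is immediate, because any larger neighborhood contains points at distance exceeding every $r_n$, and such points move at least $\delta$ under every $\gamma^n$. The maximum is over a finite set, since only $n\le\delta/\tau$ contribute. The hypothesis $\geoc\!\!\!_\gamma^{\mathrm{mod}}\subset\mathcal{N}_\gamma^{\le\delta}$ guarantees that at least one $n$ satisfies $n\tau\le\delta$, so the set is nonempty.

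\textbf{Main obstacle.} The only real work is Step 1, the displacement formula. I would verify it by direct computation with the explicit matrices, or via the hyperboloid model using the explicit loxodromic matrix acting on a point $(\cosh r,\sinh r,0,0)$-type coordinate frame. The rest is monotonicity bookkeeping.
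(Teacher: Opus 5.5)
Your argument is correct and is the standard nonsingular computation. The paper does not prove this proposition itself: it cites \cite[Prop 3.10]{FPS} and remarks that the nonsingular case also follows from Culler--Shalen. Your Step 1 is precisely the upper half-space calculation the paper carries out in the proof of Proposition \ref{prop:tuberad}, $\cosh d(\widetilde{x},\gamma\widetilde{x})=\sinh^2(r)(\cosh\tau-\cos\theta)+\cosh\tau$ with $\sinh r=|z_0|/t_0$, applied to $\gamma^n$ in place of $\gamma$.

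One small tightening: the hypothesis that the core geodesic lies in $\mathcal{N}_\gamma^{\le\delta}$ actually forces $\tau<\delta$, since otherwise the open thin part, and hence its closure, is empty. This, rather than mere nonemptiness of the index set, is what makes the maximum positive and $\mathcal{N}_\gamma^{\le\delta}$ genuinely the closed tube of radius $\max_n r_n$; the terms with $n\tau=\delta$ contribute $r_n=0$ and are harmless.
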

When $R$ is equal to the model tube radius, we will call the neighborhood $N_R(\geoc\!\!\!_\gamma^{\mathrm{mod}})$ the \emph{model $\delta$-Margulis tube associated to $\gamma$} and denote it by $\widehat{\mathbb{T}}_\delta(\gamma)$. 
Importantly, by construction, we always have the inclusion $\widehat{\mathbb{T}}_\delta(\gamma)\subseteq \mathcal{N}_\gamma^{\le\delta}$.

Work of Margulis (see \cite[\S12.6]{Ratcliffe}) shows that there is a universal $\mu_3$\footnote{Recall that we are confining ourselves to dimension $3$.
In general, the Margulis constant has a dependence on the dimension of the manifold.}, called the Margulis constant, such that every component of $M^{\le\delta}$ containing a closed geodesic is a solid torus, provided that $\delta<\mu_3$.
This can be translated into the algebraic statement that for any $\widetilde{x}\in\Hy^3$ the subgroup
$$\langle\{\gamma\in\Gamma\mid d_{\Hy^3}(\gamma \widetilde{x},\widetilde{x})<\delta\}\rangle\le\Gamma,$$
of $\Gamma$ is cyclic whenever $\delta<\mu_3$.
Recall the assumption that $\Gamma$ is cocompact and torsion-free.
However, recent work of Fr{\k{a}}czyk--Hurtado--Raimbault \cite[Thm 3.1]{FHR} building on work of Breuillard \cite{Breuillard} shows that a stronger statement holds when $\Gamma$ is arithmetic.
See \cite{CHL} for another proof of the main theorem of \cite{Breuillard}.
This is the content of the following, where only for this statement do we allow $\Gamma$ to be an arbitrary arithmetic lattice.

\begin{thm}[Fr{\k{a}}czyk--Hurtado--Raimbault]\label{thm:generalaml}
Let $G$ be a semi-simple Lie group. 
Then there exists an $\epsilon_G>0$ such that for every irreducible arithmetic lattice $\Gamma$ with adjoint trace field $k$ and every $\widetilde{x}\in\widetilde{M}\cong G/K$, the subgroup
\begin{equation}\label{eqn:subgroup}
\Gamma(\widetilde{x})=\left\langle \{\gamma\in\Gamma\mid d_{\widetilde{M}}(\widetilde{x},\gamma \widetilde{x})\le \epsilon_G[k:\Q]\}\right\rangle<\Gamma,\end{equation}
is virtually nilpotent.
\end{thm}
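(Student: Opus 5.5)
This is the Fraczyk--Hurtado--Raimbault arithmetic Margulis lemma, quoted from \cite[Thm 3.1]{FHR}; the paper uses it as a black box, so what follows is only a sketch of how its proof goes. The plan reduces virtual nilpotence to a Lehmer-type height inequality combined with Breuillard's height gap \cite{Breuillard}. Realize $\Gamma$ as commensurable with $\mathbf{G}(\mathcal{O}_k)$ for an absolutely simple adjoint $k$-group $\mathbf{G}$, where $k$ is the adjoint trace field. The ambient group is then $G \approx \prod_{v\in V_\infty'}\mathbf{G}(k_v)$, the product running over the noncompact archimedean places. Let $S=\{\gamma : d(\widetilde{x},\gamma\widetilde{x})\le \epsilon_G[k:\Q]\}$, which is a finite set. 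Suppose $\langle S\rangle$ is not virtually nilpotent. By Breuillard's strong Tits alternative / height gap, there is a constant $c=c(\dim G)>0$ such that any finite symmetric set $F$ generating a non-virtually-nilpotent subgroup has normalized height
\[
\widehat h(F)=\frac{1}{[k:\Q]}\sum_{v}\log^+ \min_{g}\max_{f\in F}\|g f g^{-1}\|_v \ \ge c .
\]
The sum runs over all places of $k$, and the minimum over $g$ conjugates within $\mathbf{G}(\overline{k}_v)$. This is the key input, and I would use it as given.

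Next I bound the height of $S$ from above. Because $\Gamma$ consists of integral points (after passing to a finite-index subgroup, which costs only a constant factor), the nonarchimedean places contribute nothing. At the compact archimedean places, $\mathbf{G}(k_v)$ is compact, so after conjugating we can take all norms bounded by $1$ and the contribution is again $0$. At each noncompact archimedean place, $\log\max_{s\in S}\|\widetilde{x}$-conjugate of $s\|_v$ is bounded by a constant multiple of the displacement, which is at most $\epsilon_G[k:\Q]$. The number of such places is at most $\operatorname{rank}$-bounded, since it is at most the number of simple factors of $G$. Summing and normalizing gives
\[
\widehat h(S)\le \frac{C_G\,\epsilon_G[k:\Q]}{[k:\Q]} = C_G\,\epsilon_G .
\]

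Choosing $\epsilon_G<c/C_G$ then contradicts the height gap, so $\langle S\rangle$ must be virtually nilpotent. The main obstacle is the height gap itself. One has to show that non-virtually-nilpotent subgroups admit a uniform lower bound on normalized height, independent of the degree of $k$; this is a Lehmer/Bilu-type equidistribution statement, and it is exactly the content of Breuillard's theorem (reproved in \cite{CHL}). A secondary technical point is the precise comparison between the displacement $d(\widetilde{x},\gamma\widetilde{x})$ and $\log\|\gamma\|_v$ at the noncompact places. That comparison is routine via the Cartan decomposition, taking $\widetilde{x}$ to be the basepoint fixed by the appropriate maximal compact subgroup after conjugation.
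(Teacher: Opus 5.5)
Your height upper bound is the right mechanism (the paper itself gives no proof and just cites \cite[Thm 3.1]{FHR}). The gap is in the key input. Breuillard's height gap \cite{Breuillard} gives $\widehat h(F)\ge c(d)$ only when $\langle F\rangle$ is \emph{not virtually solvable}. The version you state, for non-virtually-nilpotent subgroups, is false, even for integral matrices.

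Here is a counterexample. Let $\lambda_n>1$ be the real root of $x^n-x-1$, which is irreducible by Selmer. Then $\lambda_n$ is a unit and not a root of unity, and Landau's inequality gives Weil height $h(\lambda_n)\le\frac{\log 3}{2n}$. Let $F$ consist of $a=\mathrm{diag}(\lambda_n,\lambda_n^{-1})$, $b=\begin{pmatrix}1&1\\0&1\end{pmatrix}$ and their inverses. Then $\langle F\rangle\cong\Z[\lambda_n^{2}]\rtimes\Z$ is solvable but not virtually nilpotent. On the other hand, conjugating by $\mathrm{diag}(t,1)$ with $t\to0$ at each archimedean place shows that the height of $F$ (your formula, with local degrees $n_v$) equals $2h(\lambda_n)\le\frac{\log 3}{n}\to0$.

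A degree-independent gap for solvable groups would amount to a uniform lower bound on Weil heights of non-torsion units. No such bound exists; Lehmer-type bounds only give $c/\deg$. So choosing $\epsilon_G<c/C_G$ proves only that $\langle S\rangle$ is virtually solvable. The passage to virtually nilpotent is missing from your argument.

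That step needs arithmeticity again. One way to do it splits into two cases.

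\emph{$\mathbf{G}$ is $k$-anisotropic.} Then $\mathbf{G}^{ad}(k)$ has no nontrivial unipotent elements. Take a finite-index solvable subgroup of $\mathrm{Ad}\langle S\rangle$ and its Zariski closure. The unipotent radical is $k$-split in characteristic $0$, so its $k$-points are Zariski dense, hence the radical is trivial. The identity component of the closure is therefore a torus, so $\mathrm{Ad}\langle S\rangle$ is virtually abelian. Then $\langle S\rangle$, a central extension of it, is virtually nilpotent.

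\emph{$\mathbf{G}$ is $k$-isotropic.} Then it is isotropic at every archimedean place, so every archimedean place is noncompact. Hence $[k:\Q]$ is at most twice the number of simple factors of $G$. Shrinking $\epsilon_G$ so that $\epsilon_G[k:\Q]$ lies below the Margulis constant of $G$, the classical Margulis lemma finishes.

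For the paper's case $G=\PSL_2(\C)$ this step is immediate, since discrete virtually solvable subgroups are elementary.

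A smaller point concerns your parenthetical about passing to a finite-index subgroup. It does not work as stated: $[\Gamma:\Gamma\cap\mathbf{G}(\mathcal{O}_k)]$ is not uniformly bounded, and $S\cap\Gamma'$ need not generate a finite-index subgroup of $\langle S\rangle$. It is also unnecessary. $\Gamma$ is relatively compact in $\mathbf{G}(k_v)$ at every finite place, so a local conjugation already makes those contributions zero.
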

In the setting of $G=\PSL_2(\C)$ there is a recent strong arithmetic Margulis lemma due to Belolipetsky and Hurtado \cite{BH}, however, the right-hand side of their inequality depends on covolume; therefore, we opt to use this one for our purposes.

Returning to the case that $\Gamma$ is the fundamental group of a closed hyperbolic $3$-manifold, one deduces the following corollary, which we will refer to as the arithmetic Margulis lemma in the sequel.
\begin{cor}[Arithmetic Margulis lemma]\label{cor:aml}
Let $G=\PSL_2(\C)$.
There exists an $\epsilon_3=\epsilon_{\PSL_2(\C)}>0$ such that for any closed, arithmetic hyperbolic $3$-manifold $M$ with fundamental group $\Gamma=\pi_1(M)<\PSL_2(\C)$, the subgroup
$$\Gamma(\widetilde{x})=\left\langle \{\gamma\in\Gamma\mid d_{\Hy^3}(\widetilde{x},\gamma \widetilde{x})\le \delta\}\right\rangle<\Gamma,$$
is cyclic, where $\delta=\epsilon_3[k\Gamma:\Q]$ and $k\Gamma$ is the invariant trace field of $\Gamma$.
\end{cor}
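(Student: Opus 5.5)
We will deduce the corollary directly from Theorem \ref{thm:generalaml} with $G=\PSL_2(\C)$, setting $\epsilon_3=\epsilon_{\PSL_2(\C)}$. There are three points to check: the hypotheses of that theorem hold, the field $k$ there is the invariant trace field $k\Gamma$, and a virtually nilpotent subgroup of $\Gamma$ must be cyclic.

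First we verify the hypotheses. Since $M$ is closed and arithmetic, $\Gamma<\PSL_2(\C)$ is an arithmetic lattice. Irreducibility is automatic because $\PSL_2(\C)$ is simple as a real Lie group. Next we identify the adjoint trace field $\Q(\{\tr\mathrm{Ad}(\gamma)\mid\gamma\in\Gamma\})$ with $k\Gamma$. For $\gamma\in\PSL_2(\C)$ with lift $\widehat\gamma$, the adjoint representation on $\mathfrak{sl}_2(\C)$ gives, viewed as a complex representation, $\tr\mathrm{Ad}(\gamma)=\tr(\widehat\gamma)^2-1$. Hence the adjoint trace field is $\Q(\{\tr(\widehat\gamma)^2\})$, which equals $k\Gamma$ by the standard identification (e.g.\ in \cite{MRBook}, $k\Gamma=\Q(\{\tr^2\gamma\})$ for finitely generated non-elementary $\Gamma$). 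The one subtlety here is whether \cite{FHR} uses the real adjoint representation of $\PSL_2(\C)$ viewed as a real group. In that case the trace is $2\,\mathrm{Re}(\tr^2-1)$, and the field generated would be the maximal real subfield rather than $k\Gamma$. This would only change the degree by a factor of at most $2$, which can be absorbed into $\epsilon_3$. We will check the convention in \cite{FHR} and adjust the constant if necessary.

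Applying Theorem \ref{thm:generalaml} with $\delta=\epsilon_3[k\Gamma:\Q]$, we find that $\Gamma(\widetilde{x})$ is virtually nilpotent. It remains to show that it is cyclic. Since $M$ is closed, $\Gamma$ is torsion-free and contains no parabolics, so every nontrivial element is loxodromic. A virtually nilpotent discrete subgroup of $\PSL_2(\C)$ is elementary, because a non-elementary group contains a free group of rank two, which is not virtually nilpotent. A torsion-free discrete elementary group with no parabolics stabilizes a geodesic, or a pair of endpoints in $\partial\Hy^3$. Torsion-freeness and discreteness then force it to embed in $\R$ as a discrete subgroup via translation length, so it is infinite cyclic or trivial. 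One point to justify is that elements swapping the endpoints are excluded: such an element has order two acting on the axis, so its square fixes the axis pointwise, and torsion-freeness rules this out.

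We expect no serious obstacle. The only step needing care is matching the normalization of the ``adjoint trace field'' in \cite{FHR} with $k\Gamma$, and that affects only the constant $\epsilon_3$.
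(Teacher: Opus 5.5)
Your proposal is correct and follows the same route as the paper: you apply Theorem \ref{thm:generalaml} with $G=\PSL_2(\C)$, take the adjoint trace field to be $k\Gamma$, and conclude because a virtually nilpotent subgroup of the torsion-free cocompact group $\Gamma$ is cyclic. The paper states that last point in one line; your identification $\tr\mathrm{Ad}(\gamma)=\tr(\widehat\gamma)^2-1$ and your elementary-group argument fill in details the paper leaves implicit.
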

Indeed, $\Gamma$ is torsion-free and cocompact, hence the only virtually nilpotent subgroups are cyclic.

The usual Margulis tube argument now shows that the model $\delta$-Margulis tube embeds in $M$ at this larger scale.
Indeed, let $\gamma\in\Gamma$ be primitive and suppose that $\geoc\!\!\!_\gamma\subset M^{\le\delta}$.
Let $\widetilde{\mathbb{T}}_\delta(\gamma)\subset\Hy^3$ denote the preimage of $\widehat{\mathbb{T}}_\delta(\gamma)$.
If
$$g\widetilde{\mathbb{T}}_\delta(\gamma)\cap\widetilde{\mathbb{T}}_\delta(\gamma)\neq\emptyset,$$
for some $g\in\Gamma$, then there are non-zero integers $m$, $n$ and a point $\widetilde{x}$ in this intersection such that
$$d_{\Hy^3}(\widetilde{x},\gamma^m\widetilde{x})\le\delta
,\qquad\qquad
d_{\Hy^3}(\widetilde{x},g\gamma^ng^{-1}\widetilde{x})\le\delta.$$
Corollary \ref{cor:aml} therefore implies that
$$\langle\gamma^m,g\gamma^ng^{-1}\rangle,$$
is cyclic.
As $\gamma$ is primitive, it follows that
$$g\langle\gamma\rangle g^{-1}=\langle\gamma\rangle,$$
and hence $g\in\langle\gamma\rangle$.
Thus the natural local isometry $\mathcal{N}_\gamma\longrightarrow M$ restricts to an embedding of $\widehat{\mathbb{T}}_\delta(\gamma)$.
This embedding identifies $\widehat{\mathbb{T}}_\delta(\gamma)$ with the $R$-neighborhood $N_R(\geoc\!\!\!_\gamma)$, where $R=R(\geoc\!\!\!_\gamma^{\mathrm{mod}},\delta)$ is the model tube radius from Proposition~\ref{prop:FPS}.
We call this image the \emph{$\delta$-Margulis tube around $\geoc\!\!\!_\gamma$} and denote it by $\mathbb{T}_\delta(\geoc\!\!\!_\gamma)$ and we call this $R$ the \emph{tube radius}.
Importantly, by construction, we always have the inclusion $\mathbb{T}_\delta(\geoc\!\!\!_\gamma)\subseteq M^{\le\delta}$.
Arithmeticity, therefore, gives one more refined control over Margulis tubes in the $\delta$-thin part of $M$.

In fact, there are other geometric properties that one also obtains finer control over in the presence of arithmeticity.
One such is the length of the systole, which is the shortest closed, essential geodesic on $M$.
Using a lower bound on the Mahler measure due to Dobrowolski \cite{Dobrowolski} (see also \cite[Thm 3.5]{BMR}), translated into a bound on the lengths of curves as in \cite[\S12.3]{MRBook}, one obtains a lower bound on the length of the systole and consequently the length of any closed geodesic.

To recall this, for every natural number $n\ge 2$ define
$$a(n)=\begin{cases}
\displaystyle
\ln\left(\frac{3+\sqrt{5}}{2}\right),&n=2,\\[6pt]
\ln\left(1+\frac{1}{1200}\left(\frac{\ln(\ln(n))}{\ln(n)}\right)^3\right),&n\ge 3.
\end{cases}$$
Then the log of the Mahler measure of any monic reciprocal, non-cyclotomic integer polynomial is bounded below by $a(n)$.
Indeed, the degree $2$ value in the definition of $a(n)$ is the elementary minimum for reciprocal, non-cyclotomic quadratic polynomials, while the values for $n\ge3$ come from Dobrowolski's bound.
The $n=2$ case is treated separately because the displayed Dobrowolski expression does not give a positive logarithmic lower bound when $n=2$.

As $\Gamma$ is arithmetic, \cite[Cor 8.3.5, 8.3.6]{MRBook} shows that for every non-trivial $\gamma\in\Gamma$, an eigenvalue of (any lift to $\SL_2(\C)$ of) $\gamma^2$ satisfies a reciprocal, non-cyclotomic integer polynomial of even degree $2m\le 2[K_\Gamma:\Q]$.
Consequently, we conclude that
$$\leng(c_\gamma)=\frac{1}{2}\leng(c_{\gamma^2})\ge \frac{1}{2}a(2m).$$
In particular, we obtain the following bound for arbitrary hyperbolic $3$-orbifolds.

\begin{thm}\label{thm:Mahlerbound}
For any arithmetic hyperbolic $3$-orbifold $M$ with trace field of degree $d$
$$\leng(c_\gamma)\ge\ell_0(d)
:=\frac{1}{2}\min\left\{a(2m)~\middle|~1\le m\le d,\ m\in\N\right\},$$
for any closed, essential geodesic $c_\gamma$ on $M$.
\end{thm}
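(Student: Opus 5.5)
The proof is essentially the argument in the paragraph before the statement, made uniform over all geodesics and all degrees up to $d$. Fix a closed, essential geodesic $c_\gamma$ on $M$. Write $M=\Hy^3/\Gamma$ with $\Gamma$ an arithmetic Kleinian group, and let $\gamma\in\Gamma$ be a loxodromic element representing $c_\gamma$. For an orbifold, $c_\gamma$ may be a geodesic passing through singular points, but it still corresponds to a loxodromic element. Its length is the translation length $\tau$ of $\gamma$, the real part of the complex length. Hence $\gamma^2$ has translation length $2\tau$, which gives
$$\leng(c_\gamma)=\tfrac12\leng(c_{\gamma^2}).$$
It therefore suffices to bound $\leng(c_{\gamma^2})=2\tau$ from below by $a(2m)$ for some $1\le m\le d$. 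Taking the minimum over the finitely many admissible $m$ then yields a bound $\ell_0(d)$ that is independent of $\gamma$ and of $M$.

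The bound on $2\tau$ goes as follows.
\begin{enumerate}
\item Take a lift $A\in\SL_2(\C)$ of $\gamma^2$ with eigenvalues $\lambda^{\pm1}$, where $|\lambda|>1$ and $\ln|\lambda|=\tau$. The eigenvalues of $\gamma^2$ are then the squares of those of $\gamma$, and $\leng(c_{\gamma^2})=2\ln|\lambda|$, using the standard normalization $|\lambda_{\gamma^2}|=e^{\leng(c_{\gamma^2})/2}$ for the eigenvalue of a lift of $\gamma^2$.
\item Invoke \cite[Cor 8.3.5, 8.3.6]{MRBook}. Because $\Gamma$ is arithmetic, $\tr A$ is an algebraic integer in $K_\Gamma$, with $|\sigma(\tr A)|\le 2$ real at every real place of $k\Gamma$ other than the identity. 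Consequently $\lambda$ is a root of the reciprocal monic integer polynomial $p(x)=x^{2}-\tr(A)x+1$ composed with the minimal polynomial of $\tr A$. This polynomial has degree $2m$ with $m=[\Q(\tr A):\Q]\le [K_\Gamma:\Q]=d$.
\item Check that $p$ is non-cyclotomic. This holds because $|\lambda|>1$, so $\lambda$ is not a root of unity.
\item Observe that the conjugates of $\lambda$ of modulus greater than $1$ are exactly $\lambda$ and $\bar\lambda$, or only $\lambda$ itself. The other conjugates $\sigma(\lambda)$ lie on the unit circle, because $\sigma(\tr A)\in[-2,2]$ at the other places. Hence the Mahler measure satisfies $\ln \mathrm{M}(p)\le 2\ln|\lambda|=\leng(c_{\gamma^2})$.
\item Apply the lower bound $\ln\mathrm{M}(p)\ge a(2m)$, using the elementary quadratic minimum when $2m=2$ and Dobrowolski's bound otherwise.
\end{enumerate}
Combining these, $\leng(c_\gamma)\ge\tfrac12 a(2m)\ge\ell_0(d)$.

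The main care is needed in two places. The first is the Mahler measure accounting in step 4. If $k\Gamma$ is not exactly the field generated by $\tr A$, one has to pass to the minimal polynomial of $\lambda$, whose degree is still at most $2d$, and confirm that the same modulus accounting holds. The second is matching the degree to $d=[K_\Gamma:\Q]$ rather than $[k\Gamma:\Q]$. Since $\tr A=\tr(\gamma)^2-2$ lies in $k\Gamma\subseteq K_\Gamma$, either degree works as an upper bound. I would also confirm that the function $a$ is applied only at even arguments $2m\ge 2$, which is why the minimum is taken over $1\le m\le d$.
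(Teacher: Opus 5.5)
Your proposal is correct and is the paper's argument: the paper cites \cite[Cor 8.3.5, 8.3.6]{MRBook} for a reciprocal, non-cyclotomic integer polynomial of degree $2m\le 2[K_\Gamma:\Q]$ satisfied by an eigenvalue of a lift of $\gamma^2$, and then concludes $\leng(c_\gamma)=\frac12\leng(c_{\gamma^2})\ge\frac12 a(2m)$; your steps 2--5 unpack that citation and the Mahler-measure bookkeeping. The reducibility/even-degree issue you flag is harmless, because $\lambda\notin\Q(\tr A)$, so your $p$ is the minimal polynomial of $\lambda$ and has degree exactly $2m$. To see this, suppose $\lambda\in\Q(\tr A)$: a real embedding of $k\Gamma$ would send $\lambda$ to a real point on the unit circle, forcing $\lambda=\pm1$, and if $k\Gamma$ has no real embeddings then $\lambda$ is a unit in $\Q$ or an imaginary quadratic field, hence a root of unity.
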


\begin{rem}
Using the same work of Dobrowolski, one can more generally show that for any arithmetic lattice $\Gamma$ in a semisimple Lie group $G$ with adjoint trace field of degree $d$, the length of the geodesic $c_\gamma$ is
$$\leng(c_\gamma)\ge \frac{c_G}{\log(d+1)^3},$$
where $\gamma\in\Gamma$ is a non-compact, semisimple element and $c_G$ is an absolute constant depending only on $G$ (see \cite[Prop 2.6]{FHR} for instance).
This is a strictly weaker asymptotic lower bound than that of Theorem \ref{thm:Mahlerbound}, however, it would similarly be suitable for our applications.
\end{rem}

One can translate this into a lower bound on the length of all geodesics in a fibered hyperbolic $3$-manifold $M$ with invariant trace field of degree $d$ as follows, where we note that the key difference is the addition of the adjective invariant in front of trace field.

\begin{thm}\label{thm:Mahlerboundinvariant}
Given any closed, fibered arithmetic hyperbolic $3$-manifold $M=\Hy^3/\Gamma$ with genus $g$ fiber and invariant trace field of degree $d$, 
$$\leng(c_\gamma)\ge \ell_1(d)=\min\left\{\ell_0(k)~\middle|~d\le k\le 2^{2g+1}d,~k\in\N\right\},$$
for any closed, essential geodesic $c_\gamma$ on $M$.
\end{thm}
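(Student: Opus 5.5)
Our plan is to reduce Theorem~\ref{thm:Mahlerboundinvariant} to Theorem~\ref{thm:Mahlerbound} by passing to a finite-index subgroup whose trace field equals the invariant trace field up to a controlled degree. Every closed geodesic $c_\gamma$ in $M$ has a power lying in such a subgroup, and Theorem~\ref{thm:Mahlerbound} applies there. Since the minimum defining $\ell_1(d)$ runs over all $k$ in the range $[d,2^{2g+1}d]$, it suffices to show that some arithmetic orbifold (in fact a finite cover of $M$) containing a power of $\gamma$ has trace field of degree $k$ in this range. We must also lose nothing in passing to the power, or at least account for it.

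The natural choice is $\Gamma^{(2)}$, the subgroup generated by squares. It is a classical fact \cite[Thm 3.3.4, Cor 3.3.5]{MRBook} that $K_{\Gamma^{(2)}}=k\Gamma$, and that $\Gamma/\Gamma^{(2)}$ is an elementary abelian $2$-group, a quotient of $H_1(M;\Z/2)$. For a fibered $M$ with genus $g$ fiber, the exact sequence $1\to\pi_1(S_g)\to\Gamma\to\Z\to 1$ gives $\dim H_1(M;\Z/2)\le 2g+1$. Hence $[\Gamma:\Gamma^{(2)}]\le 2^{2g+1}$.

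The argument then has two steps.

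\textbf{First step: bound the trace field degree.} For the degree of $K_\Gamma$, recall that $K_\Gamma/k\Gamma$ is a multiquadratic extension generated by square roots of $\tr(\gamma)^2$ for $\gamma$ in a set of coset representatives of $\Gamma/\Gamma^{(2)}$. Indeed, $\tr(\gamma)^2=\tr(\gamma^2)+2\in k\Gamma$, and traces of general elements are polynomial in the traces of generators times elements of $k\Gamma$. So $[K_\Gamma:k\Gamma]\le 2^{2g+1}$, and $d\le[K_\Gamma:\Q]\le 2^{2g+1}d$.

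\textbf{Second step: apply Theorem~\ref{thm:Mahlerbound}.} Apply it directly to $M$ itself, whose trace field has degree $k=[K_\Gamma:\Q]$ in the allowed range. This gives $\leng(c_\gamma)\ge\ell_0(k)\ge\ell_1(d)$, with no need to pass to powers at all.

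The main obstacle is the multiquadratic degree bound in the first step. Its justification is that $K_\Gamma=k\Gamma(\tr\gamma_1,\dots,\tr\gamma_r)$ for $\gamma_i$ coset representatives of $\Gamma^{(2)}$, with $r=\dim\Gamma/\Gamma^{(2)}$ generators (this is \cite[Lem 3.5.5]{MRBook} style). Each generator satisfies $\tr(\gamma_i)^2\in k\Gamma$, so the extension has degree at most $2^r\le 2^{2g+1}$. We will verify this by noting that for $\delta\in\Gamma^{(2)}$, $\tr(\gamma_i\delta)\tr(\gamma_i)=\tr(\gamma_i^2\delta)+\tr(\delta)\in k\Gamma$, so $\tr(\gamma_i\delta)\in k\Gamma(\tr\gamma_i)$ whenever $\tr\gamma_i\neq0$. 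The case $\tr\gamma_i=0$ needs a small adjustment of the representative.

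If that adjustment proves delicate, the fallback is to apply Theorem~\ref{thm:Mahlerbound} to the cover $\Hy^3/\Gamma^{(2)}$, which has trace field of degree exactly $d$, using the geodesic $c_{\gamma^2}$. This yields only $\leng(c_\gamma)\ge\frac12\ell_0(d)$, so the first route is the intended one.
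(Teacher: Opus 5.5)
Your final route is exactly the paper's proof: bound $\dim_{\F_2}H_1(M;\F_2)\le 2g+1$ via the fibration, deduce $[K_\Gamma:k\Gamma]\le 2^{2g+1}$ from the standard Maclachlan--Reid lemma, and apply Theorem~\ref{thm:Mahlerbound} directly to $M$ with $k=[K_\Gamma:\Q]\in[d,2^{2g+1}d]$; the detour through $\Gamma^{(2)}$ and the fallback are unnecessary. If you want to verify the lemma rather than cite it, two points need fixing: the zero-trace case never occurs because $\Gamma$ is torsion-free, and your verification as written only generates $K_\Gamma$ by one trace per coset, so you also need $\tr(\gamma)\tr(\gamma')\tr(\gamma\gamma')\in k\Gamma$ (from the trace identity and $\gamma^{-1}\gamma'\gamma\gamma'=\gamma^{-2}(\gamma\gamma')^2\in\Gamma^{(2)}$) to cut the generators down to $r$.
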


\begin{proof}
As $M$ is a mapping torus of $\phi:S_g\to S_g$ over the circle, it follows that
$$\dim_{\F_2}\left(H_1(M,\F_2)\right)\le 2g+1.$$
In particular, from \cite[Lem 3.3.3]{MRBook} we conclude that $[K_\Gamma:k\Gamma]\le 2^{2g+1}$.
The theorem then follows by noting that
$$d=[k\Gamma:\Q]\le [K_\Gamma:\Q]\le 2^{2g+1}d,$$
and applying Theorem \ref{thm:Mahlerbound}.
\end{proof}

It is worth mentioning that as $d\to\infty$, one checks that $\ell_1(d)\to 0$.
In particular, such a bound does not imply the famous short geodesic conjecture, which states that there should be a universal positive lower bound on the length of the systole of any arithmetic hyperbolic $3$-orbifold, independent of the trace field. 
This conjecture also implies Salem's conjecture in number theory, that the Mahler measures of Salem polynomials admit a non-trivial universal lower bound (see \cite[\S12.3]{MRBook}).

\subsection{Work of Bowditch--Maclachlan--Reid}\label{sec:BMR}

For the sake of completeness, in this section we recall the work of Bowditch, Maclachlan, and Reid \cite{BMR}, as it will be integral to our proof of Theorem \ref{thm:finiteness}.
We opt to only give sketches of the proofs of the statements we need and refer the reader to the original paper for the detailed arguments.

Recall from Section \ref{sec:generalities} that two hyperbolic $3$-manifolds are commensurable if they share a finite sheeted cover.
We further call them \emph{cyclically commensurable} if they share a common cyclic cover.
We remark that cyclic commensurability is genuinely a stronger condition than commensurability for the class of hyperbolic surface bundles (see \cite[\S4.4]{BMR} for a discussion of this).
With this in mind, Bowditch--Maclachlan--Reid prove the following theorem \cite[Cor 4.4]{BMR}, whose proof we now sketch.

\begin{thm}[Bowditch--Maclachlan--Reid]\label{thm:BMR}
Suppose that $d$, $g$ are fixed natural numbers and that $g\ge 2$.
Then there are at most finitely many cyclic commensurability classes of closed, fibered arithmetic hyperbolic $3$-manifolds with invariant trace field of degree at most $d$ and fiber a closed surface of genus $g$.
\end{thm}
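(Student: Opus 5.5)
The plan is to reduce to finiteness of conjugacy classes of fiber subgroups $\Delta=\pi_1(S_g)<\PSL_2(\C)$, and then use the fact that the monodromy is determined, up to cyclic commensurability, by the normalizer of $\Delta$.

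\textbf{Step 1: parametrize the representations.} Fix a finite presentation of $\pi_1(S_g)$ with generators $a_1,b_1,\dots,a_g,b_g$. The $\SL_2(\C)$-character variety $X$ of $\pi_1(S_g)$ is an affine variety defined over $\Q$. Its coordinate functions are finitely many trace functions $\tr(w)$, where $w$ ranges over words of bounded length in the generators.

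\textbf{Step 2: the character is an algebraic point of bounded degree and bounded height.} Let $M$ be as in the statement with fiber group $\Delta$. Arithmeticity of $\Gamma$ makes all traces of $\Delta$ algebraic integers. They lie in $K_\Delta\subseteq K_\Gamma$, and by the argument of Theorem \ref{thm:Mahlerboundinvariant} we have $[K_\Gamma:\Q]\le 2^{2g+1}d$.

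For the height bound, consider any real or non-identity complex embedding $\sigma$ of $K_\Gamma$. Arithmeticity (the Hamiltonian condition at the real places of $k\Gamma$) gives $|\sigma(\tr\gamma)|\le 2$ for every $\gamma$ in $\Gamma^{(2)}$. For $\Delta$ itself, I would pass to $\Delta^{(2)}$, whose traces determine the character up to sign choices. Alternatively, I would use that $\tr(\gamma)^2\in k\Gamma\cdot$(integral), so the conjugates are bounded by $2$.

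At the identity embedding, the traces of the finitely many coordinate words are not a priori bounded. The step I expect to be the main obstacle is that the fiber subgroup is geometrically infinite, so there is no compactness of the discrete faithful representations with no accidental parabolics.

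\textbf{Step 3: bounding the identity embedding.} I would try to use that the coordinate traces generate a ring of integers in a field of bounded degree. The traces at the identity place need not be bounded in general. Here BMR use instead that the traces lie in finitely many fields, and that the normal subgroup $\Delta$ has Zariski-dense image in a group defined over $k\Gamma$. The key claim is that the locus of characters in $X$ with \emph{all} Galois conjugates except one bounded forms a discrete set. The reason is that the bounded conjugates pin down a compact image under restriction of scalars, and integrality then gives discreteness.

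Concretely, I would apply restriction of scalars $R_{K/\Q}X$ over each of the finitely many possible trace fields, of degree at most $2^{2g+1}d$. This is where Theorem \ref{thm:BMR} needs the bound $d$: it bounds the ambient degree. The remaining unbounded coordinate would be handled by a geometric argument. A simplicial hyperbolic surface (Section \ref{sec:simplicialsurface}) has area $2\pi(2g-2)$, so it has a pants decomposition with curves of length bounded in terms of $g$. Choosing generators along these short curves bounds their translation lengths in $M$, and hence bounds the complex traces at the identity place.

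Combining the two bounds, all conjugates of the coordinate algebraic integers are bounded in degree and size. Northcott's theorem then leaves finitely many characters, and so finitely many conjugacy classes of $\Delta$.

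\textbf{Step 4: from fiber subgroups to bundles.} Fix the conjugacy class of $\Delta$. Its normalizer $N(\Delta)$ in $\PSL_2(\C)$ is discrete, since $\Delta$ is non-elementary, and it contains $\Gamma$. Both $\Gamma$ and $N(\Delta)$ contain $\Delta$ with cyclic quotient, up to a finite group, because $\mathrm{Out}$ restricted to the image is virtually cyclic via the monodromy. So every such $\Gamma$ shares a finite-index cyclic extension of $\Delta$ with $N(\Delta)$. Therefore all such $M$ with this fiber group are cyclically commensurable, and the finiteness of classes follows.
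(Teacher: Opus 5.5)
\paragraph{Main gap: the bound at the identity place (Step 3).}
This step, which carries all the real work, has a genuine gap. A pants decomposition with cuffs of length at most $L_0(g)$ does not produce a generating set of $\Delta$ of bounded length. The $3g-3$ cuffs are pairwise disjoint, so their homology classes span an isotropic subspace of $H_1(S_g;\Z)$ of rank at most $g$; they cannot generate $\pi_1(S_g)$.

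Moreover, the coordinate functions $\tr(\delta_i\delta_j)$ and $\tr(\delta_i\delta_j\delta_k)$ require loops of bounded length at a \emph{common} basepoint, not merely short free homotopy classes. The curves you would have to add cross the cuffs, and their lengths depend on twist parameters that nothing in your argument controls. So the traces at the identity place remain unbounded. The ``discreteness'' of characters with all but one conjugate bounded cannot replace an actual bound at that remaining place.

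Separately, there are infinitely many number fields of degree at most $2^{2g+1}d$ (already infinitely many imaginary quadratic ones). So restriction of scalars over ``finitely many possible trace fields'' is not available. It is also unnecessary: bounded degree plus bounded conjugates already gives finiteness directly.

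\paragraph{The missing idea.}
The degree bound must enter \emph{geometrically}, not just through the final count.
\begin{itemize}
\item Since $[K_\Gamma:\Q]\le 2^{2g+1}d$, eigenvalues of squares of elements are roots of reciprocal non-cyclotomic integer polynomials of bounded degree. Dobrowolski's bound then gives a uniform lower bound $\epsilon_0=\epsilon_0(d,g)>0$ on the translation length of every nontrivial element of $\Delta$ (Theorem \ref{thm:Mahlerboundinvariant}).
\item The useful simplicial hyperbolic surface is $1$-Lipschitz and $\pi_1$-injective, so its injectivity radius is at least $\epsilon_0/2$.
\item It is locally CAT($-1$) (cone angle $\ge 2\pi$) with area at most $2\pi(2g-2)$. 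A packing argument therefore bounds its diameter by some $L(g,\epsilon_0)$.
\item Loops at the vertex of length at most $2L$ generate $\Delta$. Hence the traces of generators, pairs and triples are bounded by $2\cosh(3L)$ at the identity place.
\end{itemize}
This is exactly how the paper, via \cite[Thm 2.1]{BMR}, handles the one unbounded place. With it, your Steps 1--2 and the Northcott count go through.

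One correction to Step 2: the embeddings of $K_\Gamma$ lying over the complex place of $k\Gamma$, including complex conjugation, are not bounded by $2$. They have the same absolute values as the identity embedding, so the same bound $2\cosh(3L)$ covers them.

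\paragraph{Step 4 is correct, by a different route.}
The paper uses the ending laminations of $\Hy^3/\Delta$ to get $\phi^r=(\phi')^s$. Your normalizer idea also works, but it should be stated as follows rather than via $\mathrm{Out}$:
\begin{itemize}
\item $N(\Delta)$ is discrete because $\Delta$ is non-elementary.
\item Hence $N(\Delta)$ contains the cocompact lattices $\Gamma$ and $\Gamma'$ with finite index.
\item Then $\Gamma\cap\Gamma'\supseteq\Delta$ has finite index in both.
\item Subgroups between $\Delta$ and $\Gamma$ correspond to subgroups of $\Gamma/\Delta\cong\Z$. So $\Gamma\cap\Gamma'$ is normal with cyclic quotient in each, giving a common cyclic cover.
\end{itemize}
This is a clean alternative to the ending-lamination argument.
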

\begin{proof}[Sketch of the proof]
Let $M=\Hy^3/\Gamma$, $M'=\Hy^3/\Gamma'$ be two closed, fibered hyperbolic $3$-manifolds with monodromies $\phi$, $\phi'$ (respectively). 
If $\Gamma$, $\Gamma'$ share a common fiber subgroup $\Delta\cong\pi_1(S_g)$, then we claim that $M$, $M'$ are cyclically commensurable.
Indeed, the ending laminations of the infinite cyclic cover $\Hy^3/\Delta$, which is homeomorphic to $S_g\times \R$, are fixed by both $\phi$, $\phi'$.
This implies that $\phi^r=(\phi')^s$ for some integers $r$, $s$ and therefore the fiber bundle with this monodromy is a finite cyclic cover of both $M$, $M'$.
As such, it suffices to show that, up to $\PSL_2(\C)$-conjugacy, there are at most finitely many subgroups $\Delta\cong\pi_1(S_g)< \PSL_2(\C)$ which occur as the fiber subgroup of an arithmetic fibered hyperbolic $3$-manifold group $\Gamma<\PSL_2(\C)$.

To this end, suppose that $\Delta<\Gamma<\PSL_2(\C)$ is as above.
Then, as described in Section \ref{sec:generalities} and the references therein, normality of $\Delta$ in $\Gamma$ implies that the invariant trace fields $k\Delta$, $k\Gamma$ of $\Delta$, $\Gamma$ coincide.
Moreover, as $H_1(S_g,\Z)\cong\Z^{2g}$, it follows from \cite[Lem 3.3.3]{MRBook} that if $K_\Delta$ is the (non-invariant) trace field of $\Delta$ then
$$[K_\Delta:\Q]=[K_\Delta:k\Delta][k\Delta:\Q]=[K_\Delta:k\Delta][k\Gamma:\Q]\le 2^{2g}d,$$
and hence the (non-invariant) trace field $K_\Delta$ is bounded above in degree as well.
Define the quantity
$$
\epsilon_0=\min\left\{\ell_1(k)~\middle|~2\le k\le d,\ k\in\N\right\},$$
which is necessarily positive.
By definition, the translation length of any non-trivial element of $\Delta$ is then at least $\epsilon_0$.
Therefore \cite[Thm 2.1]{BMR} shows that there exists some $V=V(\epsilon_0,g)$ and a set of generators $\{\delta_1,\dots,\delta_s\}$ of $\Delta$, whose size is bounded as a function of $g$, such that for all $i,j,k\in\{1,\dots,s\}$, we have
$$|\tr(\delta_i)|\le V,\qquad|\tr(\delta_i\delta_j)|\le V,\qquad|\tr(\delta_i\delta_j\delta_k)|\le V.$$
As $\Gamma$ is arithmetic, we additionally conclude that 
\begin{align*}|\sigma(\tr(\delta_i))|\le& \max\{2,V\},\qquad|\sigma(\tr(\delta_i\delta_j))|\le \max\{2,V\},\\
&|\sigma(\tr(\delta_i\delta_j\delta_k))|\le \max\{2,V\},\end{align*}
for any field embedding $\sigma:K_\Delta\hookrightarrow \C$.
Indeed, either $\sigma\vert_{k\Delta}$ is an embedding corresponding to the unique complex place of $k\Delta$ or $\sigma\vert_{k\Delta}$ is a real embedding for which the squares of such elements necessarily lie in Hamilton's quaternions.
Consequently, for this generating set, the set of all traces of generators, pairs of generators, and triples of generators is a collection of algebraic integers that are bounded in absolute value at every Archimedean place of $K_\Delta$.

We conclude by noting that the traces of generators, pairs of generators, and triples of generators determine the $\PSL_2(\C)$-conjugacy class of $\Delta$ uniquely.
Therefore, the finiteness of the set of all algebraic integers bounded at every Archimedean place in a number field of bounded degree implies the finiteness of such conjugacy classes.

On the other hand, the set of all such integers is the same as the set of monic, irreducible integer polynomials of bounded degree with all roots bounded above in absolute value by $\max\{2,V\}$. 
By rewriting the coefficients of such a polynomial in terms of elementary symmetric polynomials and applying Cauchy--Schwarz, one finds only finitely many possibilities for each coefficient, thereby concluding there are only finitely many such polynomials.
This completes the proof. 
\end{proof}

\begin{rem}\label{rem:BMR}
Though the above proof uses aspects specific to fibered hyperbolic $3$-manifolds, a more general statement is proved in \cite[Thm 4.3]{BMR}.
Precisely, Bowditch--Maclachlan--Reid show that for any $d\in\N$ and any topological surface $S_{g,n}$ of genus $g$ and $n$ punctures with negative Euler characteristic, there are finitely many $\PSL_2(\C)$-conjugacy classes of admissible $\pi_1(S_{g,n})$-representations whose image lies in an arithmetic Kleinian group with invariant trace field of degree bounded above by $d$.
\end{rem}


\section{The proof of Theorem \ref{thm:finiteness}}\label{sec:proofsec}

The goal of this section, and the strategy of proof for Theorem~\ref{thm:finiteness}, is to show that for a fixed genus $g\geq 2$ there is a natural number $d$ (depending only on $g$) such that every arithmetic surface bundle has invariant trace field of degree at most $d$. We accomplish this in several steps, which we sketch below.
As in the arithmetic Margulis lemma, we continue to use the quantity $\delta=\epsilon_3[k\Gamma:\Q]=\epsilon_3 d$ as well as the quantity $\eta_0$ from Section \ref{sec:simplicialsurface}.

First, in Proposition \ref{prop:bers} we show a Bers-type theorem -- that there is a pants decomposition of a useful simplicial hyperbolic surface for which the cuffs have uniformly bounded length. 
Assuming the degree of the invariant trace field is large enough, the $1$-Lipschitz property will imply that the images of these curves therefore lie in $\delta$-Margulis tubes in $M$.
In particular, collars around these pants curves will also map into the $\delta$-Margulis tubes of $M$.
We will quantify a lower bound on half-width of the collars for which this occurs in Propositions \ref{prop:tuberad}, \ref{prop:cylinderarea} and Corollaries \ref{cor:collar}, \ref{cor:cylinderarea}.
Finally, in the proof of Theorem \ref{thm:finiteness}, using the arithmetic Margulis lemma, we show that for the appropriate half-width (depending on $\delta$), these cylinders are embedded in the useful simplicial hyperbolic surface and have area that grows to infinity with $\delta$.
However, Gauss--Bonnet implies these areas cannot grow unboundedly and hence there must be a bound on $\delta$.


To this end, we now assume that, for a fixed natural number $g\ge 2$, $(S_g,\rho)$ is a useful simplicial hyperbolic surface inside of some closed hyperbolic surface bundle $(M,\rho_{\mathrm{hyp}})$, where $\rho_{\mathrm{hyp}}$ is the hyperbolic metric on $M$.
We begin with a minor generalization of the famous theorem of Bers \cite{Bers}.
This is likely well known to experts, however, the authors could not find an appropriate reference for it.

\begin{prop}[Bers decomposition]\label{prop:bers}
There is a constant $L_0=L_0(g)$ such that any useful simplicial hyperbolic surface $(S_g,\rho)$ has a pants decomposition $\{c_1,\dots,c_{3g-3}\}$, where each $c_i$ is a $\rho$-geodesic with $\leng_\rho(c_i)\le L_0$ for all $i$.
\end{prop}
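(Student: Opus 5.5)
We follow Bers' original inductive argument, but use only two features of $(S_g,\rho)$. First, $\rho$ is a path metric, locally CAT($-1$), with a single cone point $v_0$ of angle $\ge 2\pi$. Second, its total area is bounded in terms of $g$.

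\textbf{Step 1: area and curvature.} By Gauss--Bonnet for a singular hyperbolic metric,
\[
\area_\rho(S_g) = 2\pi(2g-2) - (\theta_0 - 2\pi) \le 2\pi(2g-2) = 4\pi(g-1),
\]
where $\theta_0 \ge 2\pi$ is the cone angle at $v_0$. Since the only cone angle is at least $2\pi$, the metric is locally CAT($-1$). Consequently:
\begin{itemize}
\item every essential, non-peripheral free homotopy class in $S_g$, or in any subsurface with $\rho$-geodesic boundary, has a $\rho$-geodesic representative (possibly passing through $v_0$);
\item two such geodesics in distinct disjoint classes can be taken disjoint, or at worst to meet along common segments, which can be pushed slightly apart without changing the classes.
\end{itemize}
This lets us cut along geodesics and keep working with surfaces that have geodesic boundary.

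\textbf{Step 2: short curve lemma.} We show there is $L(A)$, depending only on an area bound $A$, such that any compact surface $\Sigma$ with this type of metric, area $\le A$, locally convex geodesic boundary of arbitrary length, and not a pair of pants, contains an essential non-peripheral simple closed curve of length $\le L(A)$.

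The plan is as follows. Consider the function $r(x)$ giving the largest $r$ such that the metric ball $B(x,r)$ is an embedded disk that avoids $\partial\Sigma$. Because curvature is $\le -1$, comparison with the hyperbolic plane (Bishop--Gromov in the CAT($-1$) setting) gives
\[
\area(B(x,r)) \ge 2\pi(\cosh r - 1).
\]

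\begin{itemize}
\item \emph{Interior points.} If some point has $r(x)$ large, this area bound is contradicted. Otherwise each point lies on a short loop or sits near the boundary.
\item \emph{Points near long boundary.} To handle this, run Bers' argument: enlarge the collar $N_t(\partial\Sigma)$. The area of the collar grows at least linearly with rate proportional to $\leng(\partial\Sigma)$, so it must stop being an annulus at some $t$ bounded in terms of $A$. At that time the collar self-touches, producing an arc of length $\le 2t$ joining boundary components.
\item \emph{Producing the curve.} Band-sum this arc with the boundary. This yields an essential curve that separates off a pair of pants. Its length is bounded by $2t$ plus boundary length, and that is only acceptable when the boundary is itself short.
\end{itemize}

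This is why the induction in Step 3 must control boundary lengths.

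\textbf{Step 3: induction.} Find a curve $c_1$ of length $\le L_1(g)$ using a systole bound: an embedded ball of radius $\operatorname{sys}/2$ plus the area bound gives $\operatorname{sys} \le 2\cosh^{-1}(1+2(g-1))$. Take its geodesic representative, cut along it, and repeat on the complement. The complement has area $\le 4\pi(g-1)$ and boundary length bounded by the previous step. Step 2 then gives $c_{k+1}$ with
\[
\leng(c_{k+1}) \le F\bigl(g, \max_{i\le k} \leng(c_i)\bigr),
\]
where the bound uses the collar/area argument with boundary length now bounded. After $3g-3$ steps we obtain the pants decomposition with $L_0(g) = F^{(3g-3)}(L_1)$.

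\textbf{Main obstacle.} The hardest step will be the collar growth argument near the cone point. Specifically, we must verify that the area of $N_t(\partial)$ grows at least like $\leng(\partial)\, t$ while the neighborhood is still an annulus. Cone angle $\ge 2\pi$ only helps here, since negatively curved comparison still applies and the area of equidistant sets grows at least as fast as in smooth hyperbolic geometry. We would justify this via the CAT($-1$) property and the coarea formula. We also need that geodesic representatives in this singular metric are simple and pairwise disjoint; we would argue this by lifting to the universal cover, where geodesics between fixed endpoints are unique.
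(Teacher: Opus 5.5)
\textbf{The gap.} It sits in Steps 2--3, and the problem case is where the boundary of the cut-open surface is \emph{short}, not long.

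\emph{Why the induction does not close.} Your collar estimate only gives $\leng(\partial\Sigma)\sinh t\le A$. So the first self-contact time is bounded by roughly $\sinh^{-1}(A/\leng(\partial\Sigma))$, which blows up as $\leng(\partial\Sigma)\to 0$. The contact time really can be of order $\log(1/\leng(\partial\Sigma))$; by the collar lemma this happens already for smooth hyperbolic metrics. The band-sum curve has length about $2t$ plus boundary length, so it is then not bounded. The curves you have already cut along, starting with the systole $c_1$, can be arbitrarily short for fixed $g$. Hence no bound of the form $F\bigl(g,\max_{i\le k}\leng(c_i)\bigr)$, which uses only an \emph{upper} bound on boundary lengths, comes out of your argument. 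The ``interior points'' bullet does not rescue this: deep inside the wide neighborhood of a short boundary curve, the short loops you find are peripheral.

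\emph{Step 2 as stated is false.} With $L(A)$ independent of boundary length, it fails even for smooth hyperbolic surfaces. The symmetric one-holed torus with boundary length $L\gg 1$ has every non-peripheral simple closed geodesic of length roughly $L/3$ or more.

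\emph{What is missing.} In Bers' classical argument, short boundary is handled by the collar lemma. A simple geodesic disjoint from a short curve stays out of its collar of width about $\log(1/\ell)$, so the area count effectively starts from a curve of definite length. You have no such lemma for $\rho$, and it does not follow from local CAT$(-1)$ plus an area bound. Gluing $4g-2$ tiny hyperbolic triangles along a one-vertex triangulation gives a cone surface with a single cone point of angle about $(4g-2)\pi$. That surface contains pairs of short curves with intersection number one, so no classical collar inequality holds. You would need a substitute, for instance starting the area count only once the equidistant curve reaches a fixed length and homotoping band-sum curves off the inner annulus. That substitute has to be supplied and checked in the singular metric, together with the cutting and disjointness issues you already flagged.

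\textbf{How the paper avoids this.} It uses a comparison rather than a reproof. The metric $\rho$ determines a conformal structure on $S_g$, with uniformizing hyperbolic metric $\rho_0$. Bers' theorem gives a $\rho_0$-pants decomposition with lengths at most the classical Bers constant $L_0(g)$. The metric $\rho$ is hyperbolic away from $v_0$ and has cone angle $\ge 2\pi$ there, so its curvature is $\le -1$ in the sense Ahlfors' Schwarz lemma needs (the conformal factor vanishes at the cone point). That lemma gives $\rho\le\rho_0$ pointwise, hence $\leng_\rho\le\leng_{\rho_0}$ for every curve. Taking $\rho$-geodesic representatives then finishes the proof, with no collar lemma, cutting, or induction in the singular metric.
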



\begin{proof}
For a fixed useful simplicial hyperbolic surface $(S_g,\rho)$, there is a Riemann surface $(S_g,\rho_0)$ in the same conformal class as $(S_g,\rho)$. Therefore, there is a pants decomposition $\{c'_1,\dots,c'_{3g-3}\}$ of $(S_g,\rho_0)$ with $\leng_{\rho_0}(c'_i)\le L_0$ for all $i$, where $L_0$ is the Bers constant for genus $g$ Riemann surfaces \cite{Bers}. By a theorem of Ahlfors \cite[Thm A]{Ahlfors},  $\leng_\rho(c'_i)\leq \leng_{\rho_0} (c'_i)\le L_0$ for all $i$. 
Letting $c_i$ be the corresponding $\rho$-geodesics in the homotopy class of $c_i'$ completes the proof.
\end{proof}


Fix $L_0$ as in Proposition \ref{prop:bers} and recall the notation from the end of the first paragraph in Section \ref{sec:simplicialsurface}.
For the remainder of the section, suppose also that $\delta>L_0$.
Then as useful simplicial hyperbolic surfaces are $1$-Lipschitz we have
$$\ell_1(\delta)\le \leng_M(\geoci)\le \leng_M(\barc_i)\le \leng_\rho(c_i)\le L_0<\delta,$$
for all $i$, where $\geoci$ denotes the geodesic representative of $\barci$ in $(M,\rho_{\mathrm{hyp}})$ and $\ell_1(\delta)$ is the lower bound on systole from Theorem \ref{thm:Mahlerboundinvariant}.
As $\delta=\epsilon_3 d$, any quantity that depends solely on $d$, can alternatively be written to depend only on $\delta$, which is why we write $\ell_1(\delta)$ instead of $\ell_1(d)$ in the equation above.

Using these inequalities, we obtain a quantitative bound on the distance between curves of $\rho_{\mathrm{hyp}}$-length at most $L_0$ and core $\rho_{\mathrm{hyp}}$-geodesics of $\delta$-Margulis tubes in $M$.
In what follows, given a subset $A\subset M$, $N_r(A)$ will denote the $r$-neighborhood of $A$ in $M$.
Moreover, we assume throughout the remainder of the section that $\delta$ is large enough so that $f(L_0,\delta)$ in the statement of the next proposition is well defined.


\begin{prop}\label{prop:tuberad}
Define the function
$$f(L_0,\delta)=\cosh^{-1}\left(\frac{\sinh(L_0/2)}{\sinh(\ell_1(\delta)/2)}\right).$$
Suppose that $\barc\subset M$ is a closed, primitive essential curve with geodesic representative $\geoc$. 
Suppose moreover that
$$\ell_1(\delta)\le \leng_M(\geoc)\le \leng_M(\barc)\le L_0.$$
Then $\barc\subset N_r(\geoc)$, where $r=r(\barc,\geoc)\le f(L_0,\delta)$.
\end{prop}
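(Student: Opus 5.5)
We pass to the universal cover and work in the annular cover $\mathcal{N}_\gamma=\Hy^3/\langle\gamma\rangle$, where $\gamma$ is the primitive element represented by $\barc$. The idea is that a loop of length at most $L_0$ that strays far from the core geodesic must be long, because the displacement of $\gamma$ grows with the distance from its axis.

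Concretely, lift $\barc$ to a closed curve in $\mathcal{N}_\gamma$ freely homotopic to the core geodesic $\geoc$. Let $x$ be a point of this lift at maximal distance $r$ from the core, and let $\widetilde{x}\in\Hy^3$ be a lift of $x$. Then $\barc$ lifts to a path in $\Hy^3$ from $\widetilde{x}$ to $\gamma\widetilde{x}$, so
$$d_{\Hy^3}(\widetilde{x},\gamma\widetilde{x})\le \leng_M(\barc)\le L_0.$$
It therefore suffices to bound $r$ in terms of the displacement of $\gamma$ at points at distance $r$ from its axis.

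For this we use the standard formula for a loxodromic element with complex length $\tau+i\theta$ at a point at distance $r$ from its axis:
$$\cosh d(\widetilde{x},\gamma\widetilde{x})=\cosh^2(r)\cosh(\tau)-\sinh^2(r)\cos(\theta).$$
This is the same identity that underlies Proposition~\ref{prop:FPS}. Using $\cos\theta\le 1$, it gives
$$\cosh d-1\ge \cosh^2(r)\bigl(\cosh\tau-1\bigr),$$
that is, $\sinh^2(d/2)\ge \cosh^2(r)\sinh^2(\tau/2)$. Hence
$$\cosh(r)\le \frac{\sinh(d/2)}{\sinh(\tau/2)}\le \frac{\sinh(L_0/2)}{\sinh(\ell_1(\delta)/2)}.$$
The last step uses $d\le L_0$ together with $\tau=\leng_M(\geoc)\ge\ell_1(\delta)$, where the lower bound comes from the hypothesis (equivalently, Theorem~\ref{thm:Mahlerboundinvariant}). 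Applying $\cosh^{-1}$ gives $r\le f(L_0,\delta)$. Since $x$ was the farthest point, every point of $\barc$ lies within $r$ of $\geoc$, so $\barc\subset N_r(\geoc)$.

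The main thing to check is that the distance is computed correctly. Distances from $\geoc$ in $M$ may be smaller than those in the cover, since $\mathcal{N}_\gamma\to M$ is only a local isometry, but the projection is $1$-Lipschitz and maps the core onto $\geoc$. So bounding $r$ in the cover bounds it in $M$, and the inequality goes in the required direction. We also need $\barc$ to be homotopic to $\geoc$ through the cover, so that the lift closes up with deck transformation exactly $\gamma$. This holds because $\barc$ is primitive and essential, so it represents a primitive loxodromic $\gamma$ and no power is needed. The hypothesis that $f$ is well defined, i.e. that the argument of $\cosh^{-1}$ is at least $1$, is automatic here since $\ell_1(\delta)\le L_0$.
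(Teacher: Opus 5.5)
Your proof is correct and is essentially the paper's argument: the same displacement identity $\cosh d=\sinh^2(r)(\cosh\tau-\cos\theta)+\cosh\tau$ (which the paper derives in upper half-space coordinates rather than quoting), the same estimate $\cos\theta\le 1$, and the same rearrangement to $\cosh(r)\le \sinh(L_0/2)/\sinh(\tau/2)$. Your use of the annular cover $\mathcal{N}_\gamma$ with the $1$-Lipschitz projection is a cleaner way to handle the choice of lift. The paper treats that point only implicitly, by asserting that the lift of the farthest point is at distance $r_{\max}$ from the axis $\widetilde{c}$ of the very element $\gamma$ that translates it along $\barc$.
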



\begin{proof}
This follows from standard calculations in hyperbolic geometry.

Let $\widetilde{c}$ be a lift of $\geoc$ to $\Hy^3$ and let $\gamma\in\Gamma$ be a choice of primitive element whose axis is $\widetilde{c}$.
We may normalize by conjugating $\Gamma<\PSL_2(\C)$ so that if $$\Hy^3=\{(z,t)\mid z\in\C, t\in\R_{>0}\},$$
then $\widetilde{c}$ is identified with $\{0\}\times\R_{>0}\subset\Hy^3$ and
$$\gamma=\begin{pmatrix}
e^{\lambda/2}&0\\
0&e^{-\lambda/2}
\end{pmatrix},$$
where $\lambda=\tau+i\theta$ is the complex length of $\gamma$.
Given any $\widetilde{x}=(z_0,t_0)\in\Hy^3$, then $\gamma\widetilde{x}=(e^\lambda z_0,e^\tau t_0)$ and consequently, the usual distance formula gives that
\begin{align*}
\cosh(d_{\Hy^3}(\gamma\widetilde{x},\widetilde{x}))&=1+\frac{|e^\lambda z_0- z_0|^2+|e^\tau t_0-t_0|^2}{2e^\tau t_0^2},\\
&=\frac{|z_0|^2}{t_0^2}(\cosh(\tau)-\cos(\theta))+\cosh(\tau).
\end{align*}
Now suppose that $\widetilde{x}=(z_0,t_0)$ is a lift of a point on $\barc$ of maximal radial distance $r_{max}$ from $\geoc$, which also has distance $r_{\max}$ from $\widetilde{c}$ in the universal cover $\Hy^3$.
For any point $\widetilde{x}'=(0,t_0')\in \{0\}\times\R_{>0}$ it then follows that
$$\cosh(d_{\Hy^3}(\widetilde{x},\widetilde{x}'))=1+\frac{|z_0|^2+|t_0-t_0'|^2}{2t_0t_0'}.$$
This is minimized when $t_0'=\sqrt{|z_0|^2+t_0^2}$, that is, when
$$\cosh(d_{\Hy^3}(\widetilde{x},\widetilde{x}'))=1+\frac{|z_0|^2+|t_0-t_0'|^2}{2t_0t_0'}=\frac{\sqrt{|z_0|^2+t_0^2}}{t_0}.$$
Using that $\sinh^2(x)=\cosh^2(x)-1$, we find that
$$\sinh(r_{max})=\sinh(d_{\Hy^3}(\widetilde{x},\{0\}\times \R_{>0}))=\frac{|z_0|}{t_0}.$$
In particular, one deduces the inequality
\begin{align*}
\cosh(L_0)\ge \cosh(d_{\Hy^3}(\gamma\widetilde{x},\widetilde{x}))&= \sinh^2(r_{max})(\cosh(\tau)-\cos(\theta))+\cosh(\tau),\\
&\ge \sinh^2(r_{max})(\cosh(\tau)-1)+\cosh(\tau).
\end{align*}
Rearranging this, we obtain
$$\sinh^{2}(r_{max})\le\frac{\cosh(L_0)-\cosh(\tau)}{\cosh(\tau)-1}=\frac{\sinh^2(L_0/2)}{\sinh^2(\tau/2)}-1.$$
Using a final trig identity one concludes that
$$r_{max}\le\cosh^{-1}\left(\frac{\sinh(L_0/2)}{\sinh(\tau/2)}\right)\le \cosh^{-1}\left(\frac{\sinh(L_0/2)}{\sinh(\ell_1(\delta)/2)}\right).$$
Defining $f(L_0,\delta)$ to be the rightmost term concludes the proof.
\end{proof}


As the next corollary will quantify, for large enough $\delta$, it will therefore follow that $\barci$ is contained in $M^{<\delta}$ and hence in the $\delta$-Margulis tube around $\geoci$.
We additionally require a quantification on the distance of $\barci$ to the boundary of this tube.
In the following, we assume that $\delta$ is large enough so that $h(L_0,\delta)$ is well defined.


\begin{cor}\label{cor:collar}
Define
$$h(L_0,\delta)=\cosh^{-1}\left(\sqrt{\frac{\cosh(\delta)-1}{\cosh(L_0)+1}}\right)-\cosh^{-1}\left(\frac{\sinh(L_0/2)}{\sinh(\ell_1(\delta)/2)}\right),$$
then under the hypothesis of Proposition \ref{prop:tuberad}
$$d_M(\barc,\partial\mathbb{T}_\delta(\geoc))\ge h(L_0,\delta).$$
In particular, the $h(L_0,\delta)$-neighborhood around $c$ in $(S_g,\rho)$ has image contained in $\mathbb{T}_\delta(\geoc)$ under the $1$-Lipschitz map $\iota:S_g\to M$.
\end{cor}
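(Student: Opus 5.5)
Since $\mathbb{T}_\delta(\geoc)$ is the metric neighborhood $N_R(\geoc)$ with $R$ the tube radius, the distance from $\barc$ to $\partial\mathbb{T}_\delta(\geoc)$ is at least $R$ minus the maximal distance from $\barc$ to $\geoc$. By Proposition \ref{prop:tuberad} the latter is at most $f(L_0,\delta)$, which is the second term of $h(L_0,\delta)$. It therefore suffices to show
$$R\ge \cosh^{-1}\left(\sqrt{\frac{\cosh(\delta)-1}{\cosh(L_0)+1}}\right).$$

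To bound $R$ from below I would use Proposition \ref{prop:FPS}. Since $\leng_M(\geoc)=\tau\le L_0<\delta$, the index $n=1$ is admissible in the maximum, so
$$R\ge \cosh^{-1}\left(\sqrt{\frac{\cosh(\delta)-\cos\theta}{\cosh(\tau)-\cos\theta}}\right).$$
The right-hand side is monotone in the correct direction. The numerator is at least $\cosh(\delta)-1$ because $\cos\theta\le 1$. The denominator is at most $\cosh(L_0)+1$ because $\tau\le L_0$ and $-\cos\theta\le 1$. This gives the displayed lower bound.

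I also need the hypothesis of Proposition \ref{prop:FPS}, namely that the model core geodesic lies in $\mathcal{N}_\gamma^{\le\delta}$, together with the embedding of the tube coming from Corollary \ref{cor:aml}. The first holds because $\tau<\delta$: every point on the core is moved distance $\tau<\delta$ by $\gamma$. For the embedding, the Margulis tube discussion after Corollary \ref{cor:aml} applies to the primitive element $\gamma$ representing $\barc$, because $\geoc\subset M^{\le\delta}$. Hence $\mathbb{T}_\delta(\geoc)=N_R(\geoc)$ is embedded, and distances to its boundary in $M$ are computed radially. I would add a small remark that a path from $\barc$ to $\partial\mathbb{T}_\delta$ leaving the tube must cross radii from at most $f$ to $R$. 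The triangle inequality then gives $d_M(\barc,\partial\mathbb{T}_\delta)\ge R-f\ge h(L_0,\delta)$.

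For the final statement, take a point $p$ within $\rho$-distance $h(L_0,\delta)$ of $c$ in $(S_g,\rho)$. Since $\iota$ is $1$-Lipschitz, $d_M(\iota(p),\barc)<h$ (or $\le h$). A path in $S_g$ of length less than $h$ from $c$ to $p$ maps to a path in $M$ of length less than $h$ starting on $\barc$. Such a path cannot reach $\partial\mathbb{T}_\delta(\geoc)$, so it stays inside the tube. This uses the connectedness of the image path, and with a closed neighborhood the boundary lies at distance exactly $h$, still in the closed tube.

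The main obstacle is minor: justifying that distance to the tube boundary equals $R$ minus radial distance, which needs the embeddedness from the arithmetic Margulis lemma. The rest is the monotonicity estimate above.
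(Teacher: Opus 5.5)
Your proposal is correct and follows the paper's proof essentially step for step. You bound $d_M(\barc,\partial\mathbb{T}_\delta(\geoc))\ge R-f(L_0,\delta)$ using Proposition \ref{prop:tuberad}, lower-bound $R$ by the $n=1$ term of Proposition \ref{prop:FPS} together with $\cos\theta\le 1$ and $\tau\le L_0$, and use that $\iota$ is $1$-Lipschitz for the final claim. Your extra remarks, on why $n=1$ is admissible and why the tube is embedded (the discussion after Corollary \ref{cor:aml}), only make explicit what the paper leaves implicit.
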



\begin{proof}
Suppose that the complex length of $\geoc$ is $\tau+i\theta$. If $R=R(\geoc,\delta)$ denotes the tube radius of the $\delta$-Margulis tube in $M$, then 
$$d_M(\barc,\partial\mathbb{T}_\delta(\geoc))\ge R(\geoc,\delta)-r(\barc,\geoc)\ge R(\geoc,\delta)-f(L_0,\delta),$$
by Proposition \ref{prop:tuberad}.
Using the computation due to Futer--Purcell--Schleimer in Proposition \ref{prop:FPS} and the fact that the translation length $\tau\le L_0$, we conclude that
$$R(\geoc,\delta)\ge \cosh^{-1}\left(\sqrt{\frac{\cosh(\delta)-1}{\cosh(\tau)+1}}\right)\ge \cosh^{-1}\left(\sqrt{\frac{\cosh(\delta)-1}{\cosh(L_0)+1}}\right).$$
Consequently
$$d_M(\barc,\partial\mathbb{T}_\delta(\geoc))\ge \cosh^{-1}\left(\sqrt{\frac{\cosh(\delta)-1}{\cosh(L_0)+1}}\right)-\cosh^{-1}\left(\frac{\sinh(L_0/2)}{\sinh(\ell_1(\delta)/2)}\right).$$
where this final expression is precisely $h(L_0,\delta)$, by definition.

To conclude the proof, we note that this calculation shows that the entire $h(L_0,\delta)$-neighborhood of $\barc$ (in the metric $\rho_{\mathrm{hyp}}$) is contained in $\mathbb{T}_\delta(\geoc)$.
As the map $\iota:S_g\to M$ is $1$-Lipschitz, this implies that the $h(L_0,\delta)$-neighborhood of $c$ in $S_g$ has image contained in $\mathbb{T}_\delta(\geoc)$, as required.
\end{proof}


\begin{prop}\label{prop:cylinderarea}
Suppose that $c$ is the core $\rho$-geodesic of an embedded annulus, $A_i$, in $(S_g,\rho)$ with $\leng_\rho(c)\ge \ell$ and containing the metric tube of $\rho$--half-width at least $w$, then 
$$2\ell\sinh(w)-(\nu-2\pi)\le \area_\rho(A_i),$$
where $\nu$ is the cone angle at the singular point of $\rho$.
\end{prop}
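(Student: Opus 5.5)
The plan is to compare the metric tube $N_w(c)\subset A_i$ with the standard hyperbolic collar. In a smooth hyperbolic surface, the collar of half-width $w$ about a geodesic of length $\ell$ has area exactly $2\ell\sinh(w)$. The correction $-(\nu-2\pi)$ should account for the one cone point of angle $\nu\ge 2\pi$, where the metric $\rho$ has extra negative curvature concentrated. I would obtain the estimate from Gauss--Bonnet applied to the sub-annuli $A(t)=\{x: d_\rho(x,c)\le t\}$ for $0\le t\le w$, all of which lie in $A_i$ by hypothesis.

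Set $L(t)=\leng_\rho(\partial A(t))$ and $a(t)=\area_\rho(A(t))$. The coarea formula gives $a'(t)=L(t)$ for a.e.\ $t$. For the length, I would use first variation together with Gauss--Bonnet on $A(t)$, which has Euler characteristic $0$. Since $c$ is a $\rho$-geodesic, it contributes no geodesic curvature. The only singular contribution is the cone point, if it lies in $A(t)$, and it adds curvature $2\pi-\nu\le 0$. From this I expect the inequality
$$L'(t)=\int_{\partial A(t)}k_g \;\ge\; a(t)+2\ell-\bigl(\text{correction}\bigr)\ge a(t)+2\ell-(\nu-2\pi).$$
This needs two inputs. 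First, the curvature of $\rho$ is $-1$ away from the cone point, so $\int K=-a(t)+(2\pi-\nu)\mathbf 1_{v_0\in A(t)}$. Second, the equidistant curves sweep out area with the standard sign. Alternatively, one can argue directly that the equidistant boundary has length at least $2\ell\cosh t$. The geodesic $c$ lifts to the developing picture, and each point of $c$ emits normal geodesic segments. These segments do not cross before distance $w$ because the tube is embedded. Off the cone point, the normal exponential map has Jacobian $\cosh t$, which gives $L(t)\ge 2\ell\cosh t$, and this holds only where the normal geodesics avoid the cone point.

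The main obstacle is handling the cone point rigorously. Because the angle is $\ge 2\pi$, the equidistant fronts branch there, which only adds length. What must be controlled is the area that goes missing when normal geodesics are obstructed. My first approach is to integrate the Gauss--Bonnet relation. Writing $u(t)=a(t)$, the relation $u''-u\ge 2\ell\cdot 0$ does not apply directly, so instead I would compare with the model and conclude
$$a(w)\ge \int_0^w\bigl(2\ell\cosh t-(\nu-2\pi)\text{-loss}\bigr)\,dt.$$
I expect the loss to be bounded by $\nu-2\pi$ once $w$ is normalized, and the paper's choice of $\nu\le 2\pi+\eta_0$ makes it small in any case. If this direct estimate fails, I would fall back to the cruder fact that the Gauss--Bonnet defect $\nu-2\pi$ bounds the total turning lost, and this deficit gets subtracted once from the area. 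Finally, $\area_\rho(A_i)\ge a(w)\ge 2\ell\sinh(w)-(\nu-2\pi)$, using $\leng_\rho(c)\ge\ell$.
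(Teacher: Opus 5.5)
Your proposal never establishes the inequality. The one concrete relation you write down is incorrect, and the conclusion is then reached by ``I expect'' and an unspecified fallback. Gauss--Bonnet on the annulus $A(t)$ (Euler characteristic $0$, with $c$ in its interior) gives exactly
\[
\int_{\partial A(t)} k_g\,ds \;=\; a(t)+(\nu-2\pi)\,\mathbf{1}_{v_0\in A(t)}.
\]
There is no $2\ell$ term. For the smooth hyperbolic collar, $L(t)=2\ell\cosh t$ and $L'(t)=2\ell\sinh t=a(t)$, so your inequality $L'\ge a+2\ell-\cdots$ already fails there. The cone point also enters with a plus sign: an angle $\nu\ge 2\pi$ adds turning to the boundary rather than removing it.

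The integration step does not work as stated either. A deficit of size $\nu-2\pi$ per unit of $t$ would integrate to $w(\nu-2\pi)$, not $\nu-2\pi$. Moreover, $w$ cannot be ``normalized'', since in the application $w=w_0(\delta)\to\infty$.

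Finally, your worry about area that ``goes missing'' where normal geodesics are obstructed is unfounded when $\nu\ge 2\pi$. Only a measure-zero family of normals hits the cone point, and the neighbouring ones continue unobstructed. This is because the universal cover is CAT$(-1)$, so each perpendicular to a lift of $c$ stays minimizing, and the tube is embedded. The shadow behind the cone point is extra area, not lost area. Your own remark that the fronts branch and only add length was the right observation.

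Either of your ideas closes the gap once corrected. From the identity above, $a'=L$ and $L'=\int_{\partial A(t)}k_g\,ds$ give $a''\ge a$, with $a(0)=0$ and $a'(0^+)\ge 2\leng_\rho(c)\ge 2\ell$. Set $u=a-2\ell\sinh t$. Then $(u'\cosh t-u\sinh t)'=(u''-u)\cosh t\ge 0$, hence $(u/\cosh t)'\ge 0$, and so $a(w)\ge 2\ell\sinh w$. This is even stronger than the statement, and the normal-exponential (Jacobian $\cosh t$) argument gives the same.

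The paper takes a shorter route. It applies Gauss--Bonnet only once, at the final radius $r$, writing
\[
\area_\rho(A(r))=\int_{\partial A(r)}k_g\,ds-(\nu-2\pi)\mathbf{1}_{v_0\in A(r)}.
\]
It then bounds the total turning of the two equidistant boundary curves below by $2\leng_\rho(c)\sinh r$, using curvature $\le -1$. The cone correction therefore appears exactly once, which is where the $-(\nu-2\pi)$ in the statement comes from.
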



\begin{proof}
For a regular value $0<r<w$, let $A(r)$ denote the closed metric $r$-neighborhood of $c$.
The boundary of $A(r)$ consists of two equidistant curves from $c$. For a metric of curvature at most $-1$ we have
$$\int_{\partial A(r)} k_g\,ds\ge 2\leng_\rho(c)\sinh(r)\ge2\ell\sinh(r),$$
where total geodesic curvature is understood to include any exterior angle contributions.

If the cone point belongs to $A(r)$, the cone Gauss--Bonnet formula gives
$$-\area_\rho(A(r))+\int_{\partial A(r)}k_g\,ds+(2\pi-\nu)=0.$$
If the cone point does not belong to $A(r)$, the same formula holds without the term $2\pi-\nu$.
In either case,
$$\area_\rho(A(r))\ge2\ell\sinh(r)-(\nu-2\pi).$$
Letting $r$ tend to $w$ through regular values and using $A(r)\subset A_i$ proves the result.
\end{proof}


Recalling our convention from Section \ref{sec:simplicialsurface} that we choose a useful simplicial hyperbolic surface with cone angle within $\eta_0$ of $2\pi$, we conclude the following.


\begin{cor}\label{cor:cylinderarea}
Let $\eta_0$ be as in Section \ref{sec:simplicialsurface}.
Suppose that $c$ is the core $\rho$-geodesic of an embedded annulus, $A_i$, in $(S_g,\rho)$ with $\leng_\rho(c)\ge \ell$ and containing the metric tube of $\rho$--half-width at least $w$, then 
$$2\ell\sinh(w)-\eta_0\le \area_\rho(A_i).$$
\end{cor}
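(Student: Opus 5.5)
This follows directly from Proposition \ref{prop:cylinderarea}; the only extra input is the normalization of the cone angle from Section \ref{sec:simplicialsurface}. I will apply Proposition \ref{prop:cylinderarea} verbatim to the annulus $A_i$ with core geodesic $c$. It has the same hypotheses: $\leng_\rho(c)\ge\ell$, and $A_i$ contains the metric tube of half-width at least $w$. This gives
$$2\ell\sinh(w)-(\nu-2\pi)\le \area_\rho(A_i),$$
where $\nu$ is the cone angle of $\rho$ at the unique singular point $v_0$.

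The next step uses the convention fixed in Section \ref{sec:simplicialsurface}. There, by spinning the one-vertex triangulation about $e_0$, we chose the useful simplicial surface so that the cone angle satisfies $\nu\in[2\pi,2\pi+\eta_0]$. Hence $0\le \nu-2\pi\le \eta_0$, and therefore $-(\nu-2\pi)\ge -\eta_0$.

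Substituting this bound into the displayed inequality gives
$$2\ell\sinh(w)-\eta_0\le 2\ell\sinh(w)-(\nu-2\pi)\le\area_\rho(A_i),$$
which is the claim.

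There is no real obstacle in this argument. The one point to check is that the spun surface still satisfies the standing hypotheses used in Proposition \ref{prop:cylinderarea}. These are that it is a useful simplicial hyperbolic surface with a single cone point of angle $\ge 2\pi$, and that its metric has curvature at most $-1$. Both hold by the construction recalled in Section \ref{sec:simplicialsurface}, since spinning preserves the NLSC property: the edge $e_0$ still maps to the closed geodesic $\geoc$.
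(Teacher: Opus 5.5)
Your proposal is correct and is exactly the paper's argument: apply Proposition~\ref{prop:cylinderarea} and use the normalization $\nu\in[2\pi,2\pi+\eta_0]$ from Section~\ref{sec:simplicialsurface} to get $\nu-2\pi\le\eta_0$. Your extra check that the spun surface still satisfies the standing hypotheses is not in the paper's one-line proof, but it is harmless and consistent with the setup there.
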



\begin{proof}
This follows immediately from the fact that $\nu\in[2\pi,2\pi+\eta_0]$.    
\end{proof}


Combining the ingredients above, we now prove Theorem \ref{thm:finiteness}, which we restate for the reader's convenience. 


\mainthm*


\begin{proof}[Proof of Theorem \ref{thm:finiteness}]
Fix an arbitrarily small $\eta_0>0$ as in Section \ref{sec:simplicialsurface}.
Suppose that $M$ is a closed, arithmetic hyperbolic surface bundle and that $(S_g,\rho)$ is a useful simplicial hyperbolic surface in the homotopy class of the fiber with cone angle lying in $[2\pi,2\pi+\eta_0]$.
We may always find some such surface.
We continue the notation that $\iota:S_g\to M$ is the $1$-Lipschitz map and that $\barc$, $\barci$ denote the images of curves $c$, $c_i$ in $M$.
Moreover, we use bars over group elements to denote the image under the induced map of fundamental groups
$$\iota_*:\pi_1(S_g)\to \Gamma=\pi_1(M).$$
That is, $\overline{\gamma}=\iota_*(\gamma)\in \Gamma$.
By Theorem \ref{thm:BMR} of Bowditch--Maclachlan--Reid, it will suffice to show that there is an upper bound on $\delta$, depending only on $g$.

As we are only interested in an upper bound, we may assume that $\delta$ is large enough so that $\delta> L_0$, $f(L_0,\delta)$, $h(L_0,\delta)$ are well defined, and $h(L_0,\delta)$ is positive.
Define the quantity
$$w_{0}=w_{0}(\delta)=\min\left\{h(L_0,\delta),(\delta-L_0)/2\right\},$$
which is therefore positive by our assumptions.
Let $c_i\subset S_g$ be any pants curve in the fixed Bers pants decomposition of Proposition \ref{prop:bers} and choose a lift, $\widetilde{c}_i$, of $c_i$ to $\widetilde{S_g}$.
Let $\gamma_i\in\pi_1(S_g)$ be a choice of primitive element stabilizing the axis $\widetilde{c}_i$.
Define the $w_0$-tubular neighborhood of $\widetilde{c}_i$ in $\widetilde{S_g}$ by
$$\widetilde{N_i}=\widetilde{N}_{w_0}(\widetilde{c}_i)=\{\widetilde{x}\in \widetilde{S_g}\mid d_{\widetilde{\rho}}(\widetilde{x},\widetilde{c_i})<w_{0}\},$$
where $\widetilde{\rho}$ is a lift of $\rho$ to the universal cover.
Let $N_i=N_{w_0}(c_i)$ denote the image of $\widetilde{N_i}$ in $(S_g,\rho)$.
We claim that $N_i$ is an embedded annulus with core $\rho$-geodesic $c_i$.

Suppose $N_i$ were not embedded.
Then there would be some point $\widetilde{x}_0\in \widetilde{N_i}$ and an element $a\in\pi_1(S_g)\setminus\langle\gamma_i\rangle$ for which $d_{\widetilde{\rho}}(a\widetilde{x}_0,\widetilde{c}_i)<w_0$.
In particular, there is then some $m\in\Z$ such that
$$d_{\widetilde{\rho}}(\gamma_i^ma~\!\!\widetilde{x}_0,\widetilde{x}_0)<2w_{0}+\leng_\rho(c_i)\le 2w_0+L_0.$$
Lifting $\iota:S_g\to M$ to the universal cover, we obtain a map $\widetilde{\iota}:\widetilde{S_g}\to \Hy^3$ for which
\begin{equation}\label{eqn:translation}
d_{\Hy^3}(\widetilde{\iota}(\widetilde{x}_0),\overline{\gamma}_i^m\overline{a}\widetilde{\iota}(\widetilde{x}_0))< 2w_0+L_0\le \delta.
\end{equation}
Note that $\overline{N_i}=\iota(N_i)\subset \mathbb{T}_\delta(\geoci)$ by definition of $h(L_0,\delta)$, choice of $w_0$, and Corollary \ref{cor:collar}.
Moreover, there is some $k\in\N$ such that $\overline{\gamma}^k_i$ translates $\widetilde{\iota}(\widetilde{x}_0)$ by $\le \delta$ by definition of a $\delta$-Margulis tube.
Using the notation from Equation \eqref{eqn:subgroup}, this immediately contradicts the arithmetic Margulis lemma since
$$\langle \overline{\gamma}_i^m\overline{a},\overline{\gamma}^k_i\rangle\le\Gamma(\widetilde{\iota}(\widetilde{x}_0)),$$
implies that the latter subgroup is non-elementary.
Therefore no such $a$ exists and hence $\widetilde{N_i}$ projects to an embedded cylinder $N_i$ in $(S_g,\rho)$, which has half-width at least $w_0$ and length $\ell_i=\leng_\rho(c_i)$.

As $\ell_1(\delta)\le \ell_i$ for all $i$, we conclude by Gauss--Bonnet and Corollary \ref{cor:cylinderarea} that
$$ 2\ell_1(\delta)\sinh(w_0(\delta))-\eta_0\le\area_\rho(\cup_iN_i)\le \area(S_g,\rho)\le 2\pi(2g-2),$$
where we recall that $\eta_0$ is fixed independent of $\delta$.
Therefore
$$ 2\ell_1(\delta)\sinh(w_0(\delta))-\eta_0\le 4\pi(g-1).$$
Note that $w_0$ grows at least linearly in $\delta$ as $\delta\to\infty$ and consequently
\begin{equation}\label{eqn:genusbound}
D e^{C\delta}\frac{\ln(\ln(\delta))^3}{\ln(\delta)^3}\le 2\ell_1(\delta)\sinh(w_0(\delta))-\eta_0\le 4\pi(g-1),
\end{equation}
for some positive constants $C$, $D$ and $\delta$ sufficiently large.

However, the left-hand side goes to infinity with $\delta$ and therefore $\delta$ must be bounded by a constant depending only on $g$.
This implies $[k\Gamma:\Q]$ is bounded, completing the proof.
\end{proof}

\section{The proof of Theorem \ref{thm:generalization}}\label{sec:generalization}

We now describe the necessary modifications to the proof of Theorem \ref{thm:finiteness} in order to prove Theorem \ref{thm:generalization}.
A careful reading of the proof will reveal that fiberedness was only claimed to be used in exactly two places:
\begin{enumerate}
\item To find a useful simplicial surface in the homotopy class of the fiber of a fibered manifold, and
\item In the statement and sketch of the proof of Theorem \ref{thm:BMR} of Bowditch, Maclachlan, and Reid.
\end{enumerate}
If one suitably generalizes these two conditions, then the exact same proof as Theorem \ref{thm:finiteness} will immediately yield Theorem \ref{thm:generalization} and an examination of the proof will also yield Corollary \ref{cor:generalization} as a scholium.

First, we note that (1) holds in much greater generality than stated in Section \ref{sec:simplicialsurface}.
Indeed, results of Canary \cite[Section 5]{Canary} show that every immersed, essential surface of genus $g$ in a closed hyperbolic $3$-manifold $M$ is homotopic to a useful simplicial surface.
Moreover, as before, by spinning a triangulation one can assume that its cone angle is as close to $2\pi$ as one desires.
This gives the appropriate replacement for (1).

For (2), as mentioned in Remark \ref{rem:BMR}, Bowditch, Maclachlan, and Reid actually prove something slightly more general than Theorem \ref{thm:BMR}, which we now state only in the closed case.
Recall from the introduction that an admissible surface group is one which is discrete, faithful, and with no accidental parabolics.

\begin{thm}[Bowditch--Maclachlan--Reid]\label{thm:BMR2}
Suppose that $d$, $g$ are fixed natural numbers and that $g\ge 2$.
Then there are at most finitely many $\PSL_2(\C)$-conjugacy classes of admissible surface subgroups  $\Delta=\pi_1(S_g)<\PSL_2(\C)$ whose images lie in the fundamental group $\Gamma$ of an arithmetic hyperbolic $3$-manifold with invariant trace field of degree at most $d$.
\end{thm}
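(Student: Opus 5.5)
The plan is to follow the sketch of Theorem \ref{thm:BMR} essentially verbatim, replacing the only step that used normality of the fiber subgroup. Fix an admissible $\Delta=\pi_1(S_g)<\Gamma$ with $\Gamma$ arithmetic and $[k\Gamma:\Q]\le d$. Since $\Delta$ is a subgroup of $\Gamma$, its invariant trace field $k\Delta$ is a subfield of $k\Gamma$. Hence $[k\Delta:\Q]\le d$, with no normality needed. Applying \cite[Lem 3.3.3]{MRBook} to $\Delta$, whose abelianization is $\Z^{2g}$, gives $[K_\Delta:k\Delta]\le 2^{2g}$, so $[K_\Delta:\Q]\le 2^{2g}d$.

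Next I need a uniform lower bound on translation lengths of nontrivial elements of $\Delta$. Every nontrivial element of $\Delta$ is a nontrivial element of the torsion-free cocompact arithmetic group $\Gamma$, so it is loxodromic. Theorem \ref{thm:Mahlerbound} applies to $\Gamma$, but it needs the degree of the trace field $K_\Gamma$, which is not controlled by $g$ when $M$ is arbitrary. To avoid this, I would apply the Mahler measure argument to $\Delta$ itself. For $\delta\in\Delta$, an eigenvalue of $\delta^2$ has degree at most $2[K_\Delta:\Q]$: indeed $\tr(\delta^2)\in K_\Delta$ is an algebraic integer, and the eigenvalue is a root of $x^2-\tr(\delta^2)x+1$. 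Because $\Gamma$ is arithmetic, the eigenvalue is not a root of unity, and the usual argument of \cite[Cor 8.3.5, 8.3.6]{MRBook} shows the polynomial is reciprocal and non-cyclotomic. This gives translation length at least $\epsilon_0:=\ell_0(2^{2g}d)>0$, a constant depending only on $(g,d)$.

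With $\epsilon_0$ in hand, \cite[Thm 2.1]{BMR} produces a generating set $\delta_1,\dots,\delta_s$ of $\Delta$, with $s$ bounded in terms of $g$, such that all traces of single generators, pairs and triples have absolute value at most $V(\epsilon_0,g)$. This is where admissibility enters: $\Delta$ must be discrete and faithful with all nontrivial elements loxodromic, so that the quotient $\Hy^3/\Delta$ gives a surface group to which Bers-type arguments apply. For the Galois conjugates, fix an embedding $\sigma$ of $K_\Delta$. If $\sigma|_{k\Gamma}$ is not the complex place, the arithmetic structure forces the squared elements into Hamilton's quaternions, which bounds the conjugate traces by $2$. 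Otherwise $\sigma$ is (up to complex conjugation) the identity, and the bound $V$ applies. In both cases the three-fold traces are algebraic integers of degree at most $2^{2g}d$ bounded by $\max\{2,V\}$ at every place.

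There are only finitely many such algebraic integers, by counting monic integer polynomials with bounded coefficients. Since traces of generators, pairs and triples determine the $\PSL_2(\C)$-conjugacy class of a non-elementary subgroup, we get finitely many classes. The step I expect to be the main obstacle is the precise control of Galois conjugates of traces of non-square elements $\delta_i\delta_j$, which lie in $K_\Delta$ rather than in $k\Gamma$. My first attempt would be to work with $\Delta^{(2)}$ and its bounded-index generating set, noting that $\Delta/\Delta^{(2)}$ has order $2^{2g}$. Alternatively, I would note that $\sigma(\tr\gamma)^2=\sigma(\tr\gamma^2)+2$ with $\tr\gamma^2\in k\Gamma$, which bounds $|\sigma(\tr\gamma)|$ by $2$ at the quaternionic places.
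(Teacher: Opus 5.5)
Your argument is correct once the Galois-conjugate step is stated properly, and it takes a different route from the paper. The paper first splits off the case where $\Delta<\Gamma$ is induced by a totally geodesic immersion, and disposes of it using Takeuchi's finiteness of arithmetic surfaces of genus $g$. It then shows in Lemma~\ref{lem:tglemma} that in the remaining case $k\Delta=k\Gamma$. That equality lets the paper rerun the sketch of Theorem~\ref{thm:BMR} verbatim, because $k\Delta$ then has a unique complex place and every other place is quaternionic.

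You use only the containment $k\Delta\subseteq k\Gamma$. This already gives $[K_\Delta:\Q]\le 2^{2g}d$ and the Mahler-measure lower bound on translation lengths. The eigenvalue degree is in fact at most $2d$, since $\tr(\delta^2)\in k\Delta$. So you need neither the case split nor Takeuchi; in effect you reprove a degree-bounded version of Takeuchi's finiteness inside the same framework. What the paper's route gives in addition is the equality $k\Delta=k\Gamma$ in the non-totally-geodesic case, which is exactly what yields Corollary~\ref{cor:generalization}.

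The cost of your uniform route is that the totally geodesic case, where $k\Delta\subsetneq k\Gamma$ is totally real, now sits inside your argument. There your conjugate step as written breaks down in three ways:
\begin{itemize}
\item $K_\Delta$ need not contain $k\Gamma$ (if $\Delta$ is Fuchsian then $K_\Delta\subset\R$), so $\sigma|_{k\Gamma}$ is undefined.
\item The identity place of $k\Delta$ is real but not quaternionic, so ``real place'' does not imply the Hamilton bound.
\item $\sigma$ need not be the identity on $K_\Delta$ when it is the identity on $k\Delta$.
\end{itemize}
The fix is the identity you already found, with the dichotomy placed on $\sigma_0=\sigma|_{k\Delta}$:
\begin{itemize}
\item If $\sigma_0$ is the identity or complex conjugation, then $|\sigma(\tr\gamma)|^2=|\sigma_0(\tr\gamma^2)+2|=|\tr\gamma|^2\le V^2$.
\item Otherwise, every extension $\tau$ of $\sigma_0$ to $k\Gamma$ is neither the identity nor complex conjugation. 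So $\tau$ is a real place at which $A\Gamma$ ramifies, and $|\sigma(\tr\gamma)|^2=|\tau(\tr\gamma^2)+2|\le 4$.
\end{itemize}

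Finally, the statement allows cusped $\Gamma$, so you should attribute loxodromicity of the elements of $\Delta$ to admissibility rather than to cocompactness. In the cusped case $k\Gamma$ is imaginary quadratic, and the second alternative above never occurs.
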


It is worth remarking that there is a subtle but important change to what is being shown to be finite in Theorem \ref{thm:BMR} as compared to Theorem \ref{thm:BMR2}.
Theorem \ref{thm:BMR2} would be false if one tried to analogously show that there are finitely many commensurability classes of arithmetic hyperbolic $3$-manifolds which contain an admissible surface subgroup of $\pi_1(S_g)$.
Indeed, one can show that closed, arithmetic hyperbolic surfaces totally geodesically immerse in infinitely many commensurability classes of arithmetic hyperbolic $3$-manifolds and hence that statement fails dramatically. 
However, when constructing such commensurability classes of $3$-manifolds, up to conjugation, one uses the composition of the embedding of the discrete faithful representation of $\Delta$ into $\PSL_2(\R)$ followed by the inclusion into $\PSL_2(\C)$ and therefore all such representations are $\PSL_2(\C)$-conjugate.
This justifies the formulation in Theorem \ref{thm:BMR2}.
The reason one can alternatively quantify over cyclic commensurability classes in Theorem \ref{thm:BMR} is precisely the extra structure afforded by requiring the image lies in the more restrictive class of fibered manifolds, and in particular by the use of ending laminations.

We now sketch the proof of Theorem \ref{thm:BMR2}.
To prove this, note that we only need a suitable replacement for the proof of Theorem \ref{thm:BMR} up to the definition of $\epsilon_0$, as the rest of the proof does not use the fibering structure.
To set notation, again assume that $\Delta<\Gamma<\PSL_2(\C)$, where $\Delta=\pi_1(S_g)$ is admissible and $\Gamma$ is the fundamental group of a finite-volume, arithmetic hyperbolic $3$-manifold $M$.

We first reduce to the case that $\Delta<\Gamma$ is not induced from a totally geodesic immersion of $S_g$ into $M$.
Suppose that it is and note that all totally geodesic submanifolds of an arithmetic manifold are themselves arithmetic.
In particular, up to $\PSL_2(\C)$-conjugation, $\Delta<\PSL_2(\C)$ is the composition of a discrete, faithful representation of the fundamental group of an arithmetic hyperbolic surface into $\PSL_2(\R)$ followed by the standard inclusion into $\PSL_2(\C)$.
However, there are only finitely many arithmetic surfaces of any genus $g$ \cite{Takeuchi}, which immediately implies finiteness of surface subgroups induced from totally geodesic immersions of surfaces $S_g$ into arithmetic hyperbolic $3$-manifolds.
The proof of Theorem \ref{thm:BMR2} is then finished by the following lemma, which gives the requisite bound for $\epsilon_0$.

\begin{lem}\label{lem:tglemma}
Suppose that $\Delta<\Gamma$ is not induced by a totally geodesic immersion of an arithmetic surface $S_g$ into $M$ and the degree of the invariant trace field $k\Gamma$ is at most $d$ over $\Q$, then the (non-invariant) trace field $K_\Delta$ has degree bounded above by $2^{2g}d$.
\end{lem}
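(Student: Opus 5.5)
The plan is to compare $k\Delta$ with $k\Gamma$ and then pass from $k\Delta$ to $K_\Delta$ using the same counting lemma as in the fibered case. Since $\Delta<\Gamma$, we have $k\Delta\subseteq k\Gamma$, so $[k\Delta:\Q]\le[k\Gamma:\Q]\le d$. Next, $H_1(S_g,\Z)\cong\Z^{2g}$, so $\Delta/\Delta^{(2)}\cong(\Z/2)^{2g}$. By \cite[Lem 3.3.3]{MRBook}, $K_\Delta$ is generated over $k\Delta$ by the traces of coset representatives of $\Delta^{(2)}$, and hence $[K_\Delta:k\Delta]\le 2^{2g}$. Multiplying gives
$$[K_\Delta:\Q]=[K_\Delta:k\Delta][k\Delta:\Q]\le 2^{2g}d.$$
This is a genuine upper bound, with no normality of $\Delta$ in $\Gamma$ required, exactly as in the sketch of Theorem \ref{thm:BMR}.

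The only place the hypothesis enters is in what one needs next. To define $\epsilon_0$ via $\ell_1$ or $\ell_0$, we want a lower bound on translation lengths of elements of $\Delta$. By Theorem \ref{thm:Mahlerbound} applied to $\Gamma$, this depends on the degree of $K_\Gamma$, which is not controlled by $d$ alone. So the key step is to argue via $\Delta$ itself. We need $\Delta$ to be a Kleinian group with integral traces whose invariant trace field has one complex place and whose invariant quaternion algebra is ramified at all real places. Then the Mahler-measure bound applies to eigenvalues of $\gamma^2$ for $\gamma\in\Delta$ in degree at most $2[K_\Delta:\Q]\le 2^{2g+1}d$.

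This is where non-totally-geodesic enters. If $\Delta$ is not induced by a totally geodesic immersion, then $\Delta$ is Zariski dense, i.e.\ non-elementary and not conjugate into $\PSL_2(\R)$ (up to finite index). Hence $k\Delta$ is not real: an admissible closed surface group with real invariant trace field would be conjugate into $\PSL_2(\R)$, giving a totally geodesic immersion. I expect this to be the main obstacle: making precise that real $k\Delta$ (or a non-real but degenerate situation) forces the Fuchsian, totally geodesic case. I would first try the standard fact \cite[Thm 3.2.1 and \S4.3]{MRBook} that a non-elementary group with real traces is conjugate into $\PSL_2(\R)$. Combined with discreteness, this makes $\Delta$ Fuchsian, and its limit set is a circle, so the immersion is homotopic to a totally geodesic one.

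Given that $k\Delta$ is not real, the subfield $k\Delta$ of $k\Gamma$ inherits the single complex place. The real places of $k\Delta$ extend to real places of $k\Gamma$, where $A\Gamma$ is ramified. Hence $A\Delta\otimes_{k\Delta}k\Gamma=A\Gamma$ is ramified at those places as well, and the traces of $\Delta$ are integral, being traces of elements of $\Gamma$. So \cite[Cor 8.3.5, 8.3.6]{MRBook} applies to $\Delta$. The degree bound $[K_\Delta:\Q]\le 2^{2g}d$ then feeds into Theorem \ref{thm:Mahlerbound} and yields the positive constant $\epsilon_0$ needed in the remainder of the argument for Theorem \ref{thm:BMR2}.
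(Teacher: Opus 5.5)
Your first paragraph already proves the lemma as stated, by a shorter route than the paper's.

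\textbf{How the two routes differ.} The paper uses the hypothesis to establish the equality $k\Delta=k\Gamma$. Since $k\Gamma$ has a unique complex place, any proper subfield must be totally real. In that case the traces of squares in $\Delta$ are real, $\Delta$ preserves a copy of $\Hy^2$, and the quaternion algebra argument makes the inclusion come from a totally geodesic arithmetic surface. Only after this does the paper multiply $[K_\Delta:k\Delta]\le 2^{2g}$ by $[k\Gamma:\Q]\le d$. You observe instead that the inclusion $k\Delta\subseteq k\Gamma$ already gives $[k\Delta:\Q]\le d$. So the hypothesis is idle for the stated inequality, which therefore holds for every surface subgroup of $\Gamma$.

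\textbf{What each route buys.} Yours is shorter and more general. The paper's yields the equality $k\Delta=k\Gamma$ itself, which is what is used afterwards. In particular, Corollary~\ref{cor:generalization} needs $\Delta$ to determine the invariant trace field and quaternion algebra, hence the commensurability class, of $M$.

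\textbf{Your later paragraphs.} They circle this point. Your conclusion that a non-real $k\Delta$ ``inherits the single complex place'' sharpens to $k\Delta=k\Gamma$. Indeed, a complex embedding of a subfield $F\subseteq k\Gamma$, together with its conjugate, has $2[k\Gamma:F]$ extensions to $k\Gamma$, all complex, and $k\Gamma$ has only two complex embeddings, forcing $F=k\Gamma$.

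Conversely, the detour checking that $\Delta$ is ``arithmetic-like'' is not needed for the length bound. Each $\gamma\in\Delta$ lies in the arithmetic group $\Gamma$, and $\tr(\widehat\gamma^{\,2})\in k\Gamma$ has degree at most $d$. So the Mahler-measure estimate applies directly, as the paper notes right after the lemma.

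\textbf{A small point of precision.} Adjoining the traces of $2^{2g}$ coset representatives only gives $[K_\Delta:k\Delta]\le 2^{2^{2g}}$ on its face. The bound $2^{2g}$ comes from the identity $\tr\alpha\,\tr\beta\,\tr(\alpha\beta)=\tr(\alpha^2\beta^2)+\tr(\beta^2)+\tr(\alpha)^2\in k\Delta$; all traces are nonzero because $\Delta$ is torsion-free. This identity shows that the traces of $2g$ elements representing a basis of $\Delta/\Delta^{(2)}\cong(\Z/2)^{2g}$ already generate $K_\Delta$ over $k\Delta$. That is the content of the cited lemma, which the paper uses in the same way.
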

\begin{proof}
As usual, let $k\Delta$, $k\Gamma$ denote the invariant trace fields of $\Delta$, $\Gamma$ respectively.
We first show that either the inclusion $\Delta<\Gamma$ is induced by a totally geodesic immersion of an arithmetic surface $S_g$ into $M$ or $k\Delta=k\Gamma$.
Indeed, arithmeticity of $M$ implies that $k\Gamma$ is a field with a unique complex place and therefore if $k\Delta\neq k\Gamma$ then $k\Delta$ is a proper subfield and consequently must be totally real.
As such, all traces of squares of elements of $\Delta$ are real which implies that $\Delta$ stabilizes a totally geodesic copy of $\Hy^2$ in $\Hy^3$ (see for instance \cite[Pg. 108]{Maskit}). 
That $\Delta$ is the fundamental group of an arithmetic surface then follows from a standard invariant quaternion algebra argument (see for instance \cite[Thm 9.5.2]{MRBook}).
Consequently, the hypotheses imply that $k\Delta=k\Gamma$.

As in the proof of Theorem \ref{thm:Mahlerboundinvariant}, one can bound the degree of the (non-invariant) trace field using the formula
$$[K_\Delta:k\Delta]\le 2^{\dim_{\F_2}(H_1(S_g;\F_2))}=2^{2g}.$$
From this the proof is concluded by noting that
$$[K_\Delta:\Q]=[K_\Delta:k\Delta][k\Delta:\Q]=[K_\Delta:k\Delta][k\Gamma:\Q]\le 2^{2g}[k\Gamma:\Q],$$
and using that $[k\Gamma:\Q]\le d$.
\end{proof}

We also record the length estimate needed to repeat the proof of Theorem \ref{thm:finiteness}.
If $\gamma\in\Delta$ is non-trivial, then $\tr(\widehat{\gamma}^{\,2})$ is an algebraic integer in $k\Delta=k\Gamma$.
Moreover, the argument of Theorem \ref{thm:Mahlerbound}, applied to $k\Gamma$, gives
\[
\leng(c_\gamma)\ge \frac{1}{2}a(2d)\ge \ell_0(d),
\]
for every non-trivial $\gamma\in\Delta$.
As before, since $\delta=\epsilon_3d$, we write $\ell_0(\delta)$ for $\ell_0(\delta/\epsilon_3)$.
We may therefore repeat the proof of Theorem \ref{thm:finiteness} to prove Theorem \ref{thm:generalization}, replacing every occurrence of $\ell_1(\delta)$ by $\ell_0(\delta)$.
For fixed $g$, the functions $\ell_1$ and $\ell_0$ have the same asymptotic behavior, so the final argument goes through unchanged.

We also mention that since $\Gamma$ is no longer required to be cocompact, virtual cyclicity should be replaced by virtual nilpotentency when applying the arithmetic Margulis lemma.
However, this does not present an issue since the subgroup in question contains a loxodromic element and, as $\Gamma$ is torsion-free, is therefore still cyclic.
Moreover, Lemma \ref{lem:tglemma} precisely gives Corollary \ref{cor:generalization}.

\section{Effectivization and the proof of Theorem \ref{thm:effective}}

In this section, we prove Theorem \ref{thm:effective} giving an explicitly computable asymptotic on the number of $\PSL_2(\C)$-conjugacy classes of admissible surface groups and hence on the number of cyclic commensurability classes of arithmetic surface bundles.

To do this, we first need a preliminary ingredient.
We mention that the constant $V_{\mathrm{BMR}}=V_{\mathrm{BMR}}(\epsilon_0,g)$ from the proof of Theorem \ref{thm:BMR} (which comes from \cite[Cor 2.2]{BMR}) may be taken as
$$V_{\mathrm{BMR}}=2\cosh(3K(g,0)L(g,0,\epsilon_0)),$$
where $K(g,0)$ is a certain combinatorial constant required to find a system of generators of ``standard type'' and $L(g,0,\epsilon_0)$ is the maximal diameter of a simplicial hyperbolic surface with a lower bound of $\epsilon_0$ on the injectivity radius.
However, Bowditch, Maclachlan, and Reid prove something stronger than is necessary for our purposes, as they allow precomposition by an automorphism and we are simply interested in the image subgroup.
Therefore as opposed to trying to understand the function $K(g,0)$ in their work, we instead prove the following which gives the $V$ needed in the proof of Theorem \ref{thm:BMR} with an effectively computable upper bound in $g$ and $\epsilon_0$.

\begin{lem}\label{lem:BMRconstant}
Suppose that $\Delta=\pi_1(S_g)<\PSL_2(\C)$ has the property that the translation length of every non-trivial element is at least $\epsilon_0$.
Then there exists some $V=V(\epsilon_0,g)$ and a set of generators $\{\delta_1,\dots,\delta_{2g}\}$ of $\Delta$, such that for all $i,j,k\in\{1,\dots,s\}$, we have
$$|\tr(\delta_i)|\le V,\qquad|\tr(\delta_i\delta_j)|\le V,\qquad|\tr(\delta_i\delta_j\delta_k)|\le V.$$
Moreover, $V$ is bounded above by
$$V=2\cosh\left(\frac{6\epsilon_0(g-1)}{\cosh(\epsilon_0/2)-1}\right).$$
In particular, when $0<\epsilon_0\le 1$, it follows that $V=V(g,\epsilon_0)=e^{O(g/\epsilon_0)}$.
\end{lem}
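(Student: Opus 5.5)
The plan is to realize $\Delta$ by a useful simplicial hyperbolic surface $\iota:(S_g,\rho)\to \Hy^3/\Delta$ (Section \ref{sec:simplicialsurface}, or Canary's general construction for the quotient $\Hy^3/\Delta$, which is homotopy equivalent to $S_g$ and has no parabolics under admissibility). We then bound the diameter of $(S_g,\rho)$ in terms of $g$ and $\epsilon_0$, and take as generators the loops at the cone point $v_0$ coming from a standard geometric generating system of length controlled by that diameter. Since $\iota$ is $1$-Lipschitz, a based loop of $\rho$-length $\ell$ maps to an element $\delta$ of $\Delta$ moving $\widetilde\iota(\widetilde v_0)$ by at most $\ell$. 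The standard estimate $|\tr(\widehat\delta)|\le 2\cosh(d(\widetilde x,\delta\widetilde x)/2)\le 2\cosh(d(\widetilde x,\delta \widetilde x))$ then bounds traces of words of length at most $3$ in the generators by $2\cosh(3\ell_{\max})$, where $\ell_{\max}$ is the maximal generator length.

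\textbf{Step 1 (injectivity radius).} Every essential closed curve on $(S_g,\rho)$ maps to a curve in $\Hy^3/\Delta$ of length at least the translation length, hence at least $\epsilon_0$. So every essential loop on $(S_g,\rho)$ has length at least $\epsilon_0$. Since $\rho$ is $\mathrm{CAT}(-1)$ (cone angle $\ge 2\pi$), a ball of radius $\epsilon_0/2$ around any point is embedded and has area at least that of a hyperbolic disk, namely $2\pi(\cosh(\epsilon_0/2)-1)$. This comparison holds because area growth in curvature $\le -1$ with cone angle $\ge 2\pi$ dominates the hyperbolic case.

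\textbf{Step 2 (diameter).} Take points $x,y$ realizing the diameter $D$ and a minimizing path between them. Place about $D/\epsilon_0$ points along it, spaced $\epsilon_0$ apart, so the $\epsilon_0/2$-balls around them are disjoint. Gauss--Bonnet gives $\area\le 2\pi(2g-2)$, so
$$\frac{D}{\epsilon_0}\cdot 2\pi(\cosh(\epsilon_0/2)-1)\lesssim 4\pi(g-1),$$
that is, $D\le \frac{2\epsilon_0(g-1)}{\cosh(\epsilon_0/2)-1}$ up to a bounded additive correction.

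\textbf{Step 3 (generators).} $\pi_1(S_g,v_0)$ is generated by loops of length at most $2D$ (the Dirichlet-domain/Gromov argument: in the universal cover, elements moving $\widetilde v_0$ by at most $2D$ generate). We need exactly $2g$ generators. To get them, fix a standard generating set of $2g$ loops built from shortest paths. My first attempt is to cut $(S_g,\rho)$ along a geodesic spanning tree of a Dirichlet-type cell decomposition, and argue that one can select $2g$ of the short generators that still generate, since $H_1$ has rank $2g$ and a surface group is generated by any $2g$ elements whose images generate... This last claim is false in general, and I expect it to be the main obstacle. The fallback is to take the one-vertex cut system (a system of $2g$ simple loops at $v_0$ cutting $S_g$ into a disk), chosen via a shortest-loop greedy procedure, and bound each loop's length by $2D$. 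Bounding each generator by $\ell_{\max}\le 2D$ and plugging into $2\cosh(3\ell_{\max}/2)$ or $2\cosh(\ell_{\max})$-type estimates gives
$$V=2\cosh\left(\frac{6\epsilon_0(g-1)}{\cosh(\epsilon_0/2)-1}\right).$$
For $\epsilon_0\le1$, $\cosh(\epsilon_0/2)-1\asymp\epsilon_0^2$, so the argument is $O(g/\epsilon_0)$ and $V=e^{O(g/\epsilon_0)}$.
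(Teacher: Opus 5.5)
Your route is the paper's: a simplicial hyperbolic surface for $\Delta$, disjoint $\epsilon_0/2$-disks packed along a diameter-realizing path against the Gauss--Bonnet bound $4\pi(g-1)$, generators of length at most $2D$, and $|\tr|\le 2\cosh(d/2)$ on words of length at most $3$. Steps 1 and 2 are fine. No additive correction is needed: the $\lfloor D/\epsilon_0\rfloor+1$ disjoint disks give $D\le 2\epsilon_0(g-1)/(\cosh(\epsilon_0/2)-1)$ exactly.

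\textbf{The gap is Step 3.} The paper does not prove this step; it imports it from \cite[Lem 2.5]{BMR}. The claim is that $\pi_1(S_g,v_0)$ is generated by exactly $2g$ loops, each of length at most $2D$. Gromov's argument gives generation by loops of length at most $2D$, but with no control on how many, and you rightly discard the homological selection argument. Your fallback, however, is only asserted. A greedy choice of shortest loops forming a cut system has no reason to respect the bound $2D$. Each new loop must avoid the earlier ones, so the natural bound on its length is the diameter of the surface cut open along them, and $D$ does not control that.

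\textbf{The missing idea.} It is the cut-locus (Dirichlet domain) argument your first attempt was reaching for, with Tietze elimination in place of homology.
Let $C$ be the cut locus of $v_0$ for the locally $\mathrm{CAT}(-1)$ metric $\rho$. Then $S_g\setminus C$ is an open disk, star-shaped from $v_0$, so $C$ is a graph with $V$ vertices and $E$ edges satisfying $V-E=1-2g$.
Each edge $e$ yields a side-pairing $\gamma_e$, represented by the two minimizing geodesics from $v_0$ to a point of $e$. Hence $\gamma_e$ has length at most $2D$.
The $\gamma_e$ generate $\pi_1(S_g,v_0)$, and there is one relation for each vertex of $C$. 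In that relation, $\gamma_e$ occurs once for each end of $e$ at the vertex.
Root a spanning tree $T\subset C$ and process vertices from the leaves inward. The tree edge joining a vertex $u$ to its parent is not a loop, so its generator occurs exactly once in the relation at $u$. Every other letter in that relation is either a non-tree generator or the generator of a tree edge to a child of $u$, which has already been expressed in non-tree generators.
Hence the $E-(V-1)=2g$ non-tree generators generate $\pi_1(S_g,v_0)$, and each has length at most $2D$.

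\textbf{The constant.} Keep the trace estimate consistent. Using $2\cosh(3\ell_{\max}/2)\le 2\cosh(3D)$ gives exactly the stated $V$. The bound $2\cosh(3\ell_{\max})$ from your first paragraph doubles the exponent. That is harmless for $e^{O(g/\epsilon_0)}$, but it does not give the stated constant.
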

\begin{proof}
As above, let $L(g,0,\epsilon_0)$ be the diameter of the simplicial hyperbolic surface with injectivity radius at least $\epsilon_0$.
That 
$$L(g,0,\epsilon_0)\le \frac{2\epsilon_0(g-1)}{\cosh(\epsilon_0/2)-1},$$
follows from a standard packing argument (explicitly written in \cite[Prop. 2.7]{BMR}) using embedded balls of radius $\epsilon_0/2$ on a hyperbolic surface combined with the Gauss--Bonnet theorem.
Moreover, \cite[Lem 2.5]{BMR} shows that there is a system of generators $\{\delta_1,\dots,\delta_{2g}\}$ of $\Delta$ with length at most $2L(g,0,\epsilon_0)$ and hence all of the trace quantities above are bounded by $3$ times this.
Using the relationship between traces and lengths, it therefore follows that 
$$V\le 2\cosh\left(\frac{6L(g,0,\epsilon_0)}{2}\right)=2\cosh\left(\frac{6\epsilon_0(g-1)}{\cosh(\epsilon_0/2)-1}\right),$$
as required.
The remaining statement follows from the fact that
$$\cosh(\epsilon_0/2)-1\ge \epsilon_0^2/8,$$
in the aforementioned regime. 
\end{proof}

We now prove Theorem \ref{thm:effective}.

\begin{proof}[Proof of Theorem \ref{thm:effective}]
That everything in this paper can be bounded by an effectively computable function of $g$ is immediate.
Indeed, $\epsilon_0$ is clearly effectively computable, $\epsilon_3$ is effectively computable by the proof of \cite[Thm~3.1]{FHR}, $w_0$ is effectively computable as a function of $d$, a bound on $d$ in terms of $g$ is effectively computable by Equation \eqref{eqn:genusbound}, the function $V(\epsilon_0,g)$ is effectively computable by Lemma \ref{lem:BMRconstant}, and hence the number of polynomials considered in the proof of Theorem \ref{thm:BMR} is effectively computable as a function of $g$.

What remains is to show the asymptotic rate.
We will put the ingredients together using standard asymptotic notation.
Note first that by the bound of Dobrowolski, we have
$$\epsilon_0=\Omega\left(\left(\frac{\ln(\ln(d))}{\ln(d)}\right)^3\right),$$
and consequently Lemma \ref{lem:BMRconstant} shows that
$$V=e^{O\left(g\left(\frac{\ln(d)}{\ln(\ln(d))}\right)^3\right)}.$$
Since we have already shown that $d=O(g)$ in Equation \eqref{eqn:genusbound}, we therefore obtain that
$$V=e^{O\left(g\left(\frac{\ln(g)}{\ln(\ln(g))}\right)^3\right)}.$$

To obtain the best upper bound possible, we now must try to optimize the remaining step of the proof of Theorem \ref{thm:BMR}, which is to efficiently count the number of traces needed to specify a surface representation as well as to efficiently count the number of integer polynomials possible.
A priori the argument in the proof of Theorem \ref{thm:BMR} shows that the number of traces of generators needed is $O(g^3)$, however this can be improved.
Indeed the paragraph following \cite[Prop 2.3]{BMR} shows that not all traces of products of pairs and triples of generators are necessary to fix the conjugacy class of such a representation. As written therein, one only needs the traces of elements from the set
$$\{\delta_i\}_{i=1}^{2g}\cup\{\delta_1\delta_i\}_{i=2}^{2g}\cup\{\delta_2\delta_i\}_{i=3}^{2g}\cup\{\delta_1\delta_2\delta_i\}_{i=3}^{2g},$$
which one readily sees has cardinality $8g-5$. 
Therefore we can improve this asymptotic from $O(g^3)$ to $O(g)$.

For the count of polynomials, rather than count the traces themselves, we count their squares in the invariant trace field $k\Gamma$.
Indeed, noting that
$$\tr(A^2)=\tr(A)^2-2,$$
for each of the aforementioned elements $A$, the algebraic integer $\tr(A^2)$ lies in $k\Gamma$ and has degree at most $d=[k\Gamma:\Q]=O(g)$.
Moreover, all but at most two of its conjugates have absolute value at most $2$, while the remaining two have absolute value at most $V^2+2$.

Let $p(x)=x^m+c_1x^{m-1}+\cdots+c_m$ be the minimal polynomial of one such squared trace, where $m\le d$.
Using elementary symmetric polynomials, each $c_i$ is the sum of at most $2^m$ products of roots and each such product has absolute value at most $2^m(V^2+2)^2$.
Therefore
$$|c_i|\le 4^m(V^2+2)^2\le 4^d(V^2+2)^2.$$
Since $d=O(g)$ it follows that
$$|c_i|\le e^{O\left(g\left(\frac{\ln(g)}{\ln(\ln(g))}\right)^3\right)}.$$
which gives the asymptotic for the number of possibilities of each $c_i$.
Since the degree is at most $d=O(g)$, there are therefore at most
$$e^{O\left(g^2\left(\frac{\ln(g)}{\ln(\ln(g))}\right)^3\right)},$$
possible minimal polynomials, and hence at most this many possible squared traces, for each of the $8g-5$ elements listed above.

Finally, each squared trace determines the corresponding ordinary trace up to at most two choices.
It follows that the total number of possible trace tuples is bounded above by
$$\left(2e^{O\left(g^2\left(\frac{\ln(g)}{\ln(\ln(g))}\right)^3\right)}
\right)^{8g-5}=e^{O\left(g^3\left(\frac{\ln(g)}{\ln(\ln(g))}\right)^3\right)}.$$
We therefore conclude that the number of such conjugacy classes is bounded above by the asymptotic
$$e^{O\left(g^3\left(\frac{\ln(g)}{\ln(\ln(g))}\right)^3\right)}=e^{g^{3+o(1)}},$$
with effectively computable constants, as required.
\end{proof}


\bibliographystyle{abbrv}
\bibliography{biblio}

@article{Gabai,
  author  = {Gabai, David},
  title   = {Foliations and the topology of 3-manifolds. {III}},
  journal = {J. Differential Geom.},
  volume  = {26},
  number  = {3},
  year    = {1987},
  pages   = {479--536},
  doi     = {10.4310/jdg/1214441488},
  url     = {https://doi.org/10.4310/jdg/1214441488}
}

@article{SistoTaylor,
  author  = {Sisto, Alessandro and Taylor, Samuel J.},
  title   = {Largest projections for random walks and shortest curves in random mapping tori},
  journal = {Math. Res. Lett.},
  volume  = {26},
  number  = {1},
  year    = {2019},
  pages   = {293--321},
  doi     = {10.4310/MRL.2019.v26.n1.a14},
  url     = {https://doi.org/10.4310/MRL.2019.v26.n1.a14}
}

@article{BiringerSouto,
  author  = {Biringer, Ian and Souto, Juan},
  title   = {A finiteness theorem for hyperbolic 3-manifolds},
  journal = {J. Lond. Math. Soc. (2)},
  volume  = {84},
  number  = {1},
  year    = {2011},
  pages   = {227--242},
  doi     = {10.1112/jlms/jdq106},
  url     = {https://doi.org/10.1112/jlms/jdq106}
}

@misc{Fan,
  author        = {Fan, Carol E.},
  title         = {Injectivity radius bounds in hyperbolic {$I$}-bundle convex cores},
  year          = {1999},
  eprint        = {math/9907052},
  archiveprefix = {arXiv},
  primaryclass  = {math.GT},
  url           = {https://arxiv.org/abs/math/9907052}
}

@article{Bonahon,
  author  = {Bonahon, Francis},
  title   = {Bouts des vari{\'e}t{\'e}s hyperboliques de dimension 3},
  journal = {Ann. of Math. (2)},
  volume  = {124},
  number  = {1},
  year    = {1986},
  pages   = {71--158},
  doi     = {10.2307/1971388},
  url     = {https://doi.org/10.2307/1971388}
}

@article{Canary,
  author  = {Canary, Richard D.},
  title   = {A covering theorem for hyperbolic 3-manifolds and its applications},
  journal = {Topology},
  volume  = {35},
  number  = {3},
  year    = {1996},
  pages   = {751--778},
  doi     = {10.1016/0040-9383(94)00055-7},
  url     = {https://doi.org/10.1016/0040-9383(94)00055-7}
}

@incollection{Bers,
  author    = {Bers, Lipman},
  title     = {An inequality for {R}iemann surfaces},
  booktitle = {Differential Geometry and Complex Analysis},
  editor    = {Chavel, Isaac and Farkas, Hershel M.},
  publisher = {Springer},
  address   = {Berlin},
  year      = {1985},
  pages     = {87--93},
  doi       = {10.1007/978-3-642-69828-6_7},
  url       = {https://doi.org/10.1007/978-3-642-69828-6_7}
}

@incollection{Thurston2,
  author    = {Thurston, William P.},
  title     = {Hyperbolic structures on 3-manifolds, {II}: Surface groups and 3-manifolds which fiber over the circle},
  booktitle = {Collected Works of William P. Thurston with Commentary. Vol. II: 3-Manifolds, Complexity and Geometric Group Theory},
  note      = {August 1986 preprint; January 1998 eprint},
  publisher = {American Mathematical Society},
  address   = {Providence, RI},
  year      = {2022},
  pages     = {79--110}
}

@article{Ahlfors,
  author  = {Ahlfors, Lars V.},
  title   = {An extension of {S}chwarz's lemma},
  journal = {Trans. Amer. Math. Soc.},
  volume  = {43},
  number  = {3},
  year    = {1938},
  pages   = {359--364},
  doi     = {10.1090/S0002-9947-1938-1501949-6},
  url     = {https://doi.org/10.1090/S0002-9947-1938-1501949-6}
}

@article{Bass,
  author  = {Bass, Hyman},
  title   = {Groups of integral representation type},
  journal = {Pacific J. Math.},
  volume  = {86},
  number  = {1},
  year    = {1980},
  pages   = {15--51},
  url     = {https://projecteuclid.org/journals/pacific-journal-of-mathematics/volume-86/issue-1/Groups-of-integral-representation-type/pjm/1102780613.full}
}

@article{BMR,
  author  = {Bowditch, B. H. and Maclachlan, C. and Reid, A. W.},
  title   = {Arithmetic hyperbolic surface bundles},
  journal = {Math. Ann.},
  volume  = {302},
  number  = {1},
  year    = {1995},
  pages   = {31--60},
  doi     = {10.1007/BF01444486},
  url     = {https://doi.org/10.1007/BF01444486}
}

@article{NeumannZagier,
  author  = {Neumann, Walter D. and Zagier, Don},
  title   = {Volumes of hyperbolic three-manifolds},
  journal = {Topology},
  volume  = {24},
  number  = {3},
  year    = {1985},
  pages   = {307--332},
  doi     = {10.1016/0040-9383(85)90004-7},
  url     = {https://doi.org/10.1016/0040-9383(85)90004-7}
}

@article{Borel,
  author  = {Borel, Armand},
  title   = {Commensurability classes and volumes of hyperbolic 3-manifolds},
  journal = {Ann. Scuola Norm. Sup. Pisa Cl. Sci. (4)},
  volume  = {8},
  number  = {1},
  year    = {1981},
  pages   = {1--33},
  url     = {https://www.numdam.org/item/ASNSP_1981_4_8_1_1_0/}
}

@article{Thurston4,
  author  = {Thurston, William P.},
  title   = {Three-dimensional manifolds, {K}leinian groups and hyperbolic geometry},
  journal = {Bull. Amer. Math. Soc. (N.S.)},
  volume  = {6},
  number  = {3},
  year    = {1982},
  pages   = {357--381},
  doi     = {10.1090/S0273-0979-1982-15003-0},
  url     = {https://doi.org/10.1090/S0273-0979-1982-15003-0}
}

@article{Agol,
  author  = {Agol, Ian},
  title   = {The virtual {H}aken conjecture},
  note    = {With an appendix by Ian Agol, Daniel Groves, and Jason Manning},
  journal = {Doc. Math.},
  volume  = {18},
  year    = {2013},
  pages   = {1045--1087},
  url     = {https://www.elibm.org/article/10000267}
}

@article{Thurston3,
  author  = {Thurston, William P.},
  title   = {Hyperbolic structures on 3-manifolds. {I}. Deformation of acylindrical manifolds},
  journal = {Ann. of Math. (2)},
  volume  = {124},
  number  = {2},
  year    = {1986},
  pages   = {203--246},
  doi     = {10.2307/1971277},
  url     = {https://doi.org/10.2307/1971277}
}

@book{Ratcliffe,
  author    = {Ratcliffe, John G.},
  title     = {Foundations of Hyperbolic Manifolds},
  series    = {Graduate Texts in Mathematics},
  volume    = {149},
  edition   = {Second},
  publisher = {Springer},
  address   = {New York},
  year      = {2006},
  doi       = {10.1007/978-0-387-47322-2},
  url       = {https://doi.org/10.1007/978-0-387-47322-2}
}

@article{CHL,
  author  = {Chen, Lvzhou and Hurtado, Sebastian and Lee, Homin},
  title   = {A height gap in {$\mathrm{GL}_d(\overline{\mathbb Q})$} and almost laws},
  journal = {Groups Geom. Dyn.},
  volume  = {19},
  number  = {3},
  year    = {2025},
  pages   = {899--912},
  doi     = {10.4171/GGD/800},
  url     = {https://doi.org/10.4171/GGD/800}
}

@article{Breuillard,
  author  = {Breuillard, Emmanuel},
  title   = {A height gap theorem for finite subsets of {$\mathrm{GL}_d(\overline{\mathbb Q})$} and nonamenable subgroups},
  journal = {Ann. of Math. (2)},
  volume  = {174},
  number  = {2},
  year    = {2011},
  pages   = {1057--1110},
  doi     = {10.4007/annals.2011.174.2.7},
  url     = {https://doi.org/10.4007/annals.2011.174.2.7}
}

@article{Dobrowolski,
  author  = {Dobrowolski, Edward},
  title   = {On a question of {L}ehmer and the number of irreducible factors of a polynomial},
  journal = {Acta Arith.},
  volume  = {34},
  number  = {4},
  year    = {1979},
  pages   = {391--401},
  doi     = {10.4064/aa-34-4-391-401},
  url     = {https://doi.org/10.4064/aa-34-4-391-401}
}

@book{MRBook,
  author    = {Maclachlan, Colin and Reid, Alan W.},
  title     = {The Arithmetic of Hyperbolic 3-Manifolds},
  series    = {Graduate Texts in Mathematics},
  volume    = {219},
  publisher = {Springer-Verlag},
  address   = {New York},
  year      = {2003},
  doi       = {10.1007/978-1-4757-6720-9},
  url       = {https://doi.org/10.1007/978-1-4757-6720-9}
}

@article{FPS,
  author  = {Futer, David and Purcell, Jessica S. and Schleimer, Saul},
  title   = {Effective distance between nested {M}argulis tubes},
  journal = {Trans. Amer. Math. Soc.},
  volume  = {372},
  number  = {6},
  year    = {2019},
  pages   = {4211--4237},
  doi     = {10.1090/tran/7678},
  url     = {https://doi.org/10.1090/tran/7678}
}

@book{Thurston,
  author    = {Thurston, William P.},
  title     = {The Geometry and Topology of Three-Manifolds. Vol. {IV}},
  note      = {Edited and with a preface by Steven P. Kerckhoff and a chapter by J. W. Milnor},
  publisher = {American Mathematical Society},
  address   = {Providence, RI},
  year      = {2022}
}

@article{BH,
  author        = {Belolipetsky, Mikhail and Hurtado, Sebastian},
  title         = {A strong height gap theorem for ${PGL}_2$},
  journal       = {arXiv preprint arXiv:2507.22266},
  year          = {2025},
  eprint        = {2507.22266},
  archivePrefix = {arXiv},
  primaryClass  = {math.GR},
}

@article{FHR,
  author  = {Fr{\k{a}}czyk, Miko{\l}aj and Hurtado, Sebastian and Raimbault, Jean},
  title   = {Topological complexity of arithmetic locally symmetric spaces},
  journal = {Invent. Math.},
  volume  = {244},
  year    = {2026},
  pages   = {143--164},
  doi     = {10.1007/s00222-025-01394-1},
  url     = {https://doi.org/10.1007/s00222-025-01394-1}
}

@article{CS,
  author  = {Culler, Marc and Shalen, Peter B.},
  title   = {The volume of a hyperbolic 3-manifold with {B}etti number 2},
  journal = {Proceedings of the American Mathematical Society},
  volume  = {120},
  number  = {4},
  pages   = {1281--1288},
  year    = {1994},
  doi     = {10.2307/2160250},
}

@article {Takeuchi,
    AUTHOR = {Takeuchi, Kisao},
     TITLE = {Arithmetic {F}uchsian groups with signature {$(1;e)$}},
   JOURNAL = {J. Math. Soc. Japan},
  FJOURNAL = {Journal of the Mathematical Society of Japan},
    VOLUME = {35},
      YEAR = {1983},
    NUMBER = {3},
     PAGES = {381--407},
      ISSN = {0025-5645,1881-1167},
   MRCLASS = {10D07 (20H10)},
  MRNUMBER = {702765},
MRREVIEWER = {K.-B.\ Gundlach},
       DOI = {10.2969/jmsj/03530381},
       URL = {https://doi.org/10.2969/jmsj/03530381},
}

@book {Maskit,
    AUTHOR = {Maskit, Bernard},
     TITLE = {Kleinian groups},
    SERIES = {Grundlehren der mathematischen Wissenschaften [Fundamental
              Principles of Mathematical Sciences]},
    VOLUME = {287},
 PUBLISHER = {Springer-Verlag, Berlin},
      YEAR = {1988},
     PAGES = {xiv+326},
      ISBN = {3-540-17746-9},
   MRCLASS = {30F40 (20H10 22E40)},
  MRNUMBER = {959135},
MRREVIEWER = {William\ Abikoff},
}
\end{document}